\newtheorem{thm}{Theorem}[section]
\newtheorem{cor}[thm]{Corollary}
\newtheorem{prop}[thm]{Proposition}
\theoremstyle{definition}
\newtheorem{defn}[thm]{Definition}
\theoremstyle{remark}
\numberwithin{equation}{section}
\newcommand{\h}{\mathcal{H}}
\newcommand{\F}{\mathcal{F}}
\newcommand{\K}{\mathcal{K}}
\begin{document}
\title[Frames and Operators in Hilbert $C^*$-modules]{Frames and Operators in Hilbert $C^*$-modules}%
\author{ Abbas Najati, M. Mohammadi Saem  and and P. G\u{a}vru\c ta }%
\address{ \newline
\indent Abbas Najati and M.M. Saem
\newline
\indent Department of Mathematics
\newline
\indent Faculty of Mathematical Sciences
\newline
\indent   University of Mohaghegh Ardabili
\newline \indent  Ardabil 56199-11367
\newline \indent Iran}
\email{a.najati@uma.ac.ir,~ a.nejati@yahoo.com~(\rm A. Najati)}
\email{m.mohammadisaem@yahoo.com~(\rm M. M. Saem)}
\address{
\newline
\indent  P. G\u avru\c ta
\newline
\indent Department of Mathematics
\newline \indent  Politehnica University of Timi\c soara
\newline \indent Pia\c ta Victoriei, Nr. 2, 300006, Timi\c soara
\newline \indent Romania}
 \email{pgavruta@yahoo.com}

\subjclass[2000]{Primary  42C15, 46L05, 46H25} \keywords{atomic
system, $K$-frame, local atom, $C^*$-algebra, Hilbert $C^*$-module,
Bessel sequence, orthonormal basis .}

\begin{abstract}
In this paper we introduce the concepts of atomic systems for operators and
$K$-frames in Hilbert $C^*$--modules and we establish some results.
\end{abstract}
\maketitle
\section{{\textbf INTRODUCTION}}
Frames for Hilbert spaces were introduced by Duffin and
Schaeffer\cite{ds} as part of their research in non-harmonic Fourier
series. A finite or countable sequence $\{f_n\}_{n\in I}$ is called
a frame for a separable Hilbert space $\h$ if there exist constants
$A, B
>0$ such that
\begin{equation}
A\|f\|^2\leqslant \sum_{n\in I} |\langle f,f_n\rangle|^2\leqslant
B\|f\|^2,\quad f\in \h.
\end{equation}
\par
The frames have many properties which make them very useful in applications. See \cite{ch}.

Frank and Larson \cite{fl1,fl2} extended this concept for countably
generated Hilbert $C^*$-modules.
\par
Let $A$ be a $C^*$-algebra and $\h$ be a  left $A$-module. We assume
that the linear operations of $A$ and $\h$ are comparable, i.e.
$\lambda(ax) = (\lambda a)x = a(\lambda x)$ for every $\lambda \in
\Bbb{C}, a \in A$ and $x\in\h.$ Recall that $\h$ is  a pre-Hilbert
$A$-module if there exists a sesquilinear mapping $\langle
.,.\rangle: \h\times \h\rightarrow A$ with the properties
\begin{enumerate}
\item $\langle x,x\rangle\geq 0$; if $\langle x,x\rangle=0$, then $x=0$ for every $x\in \h$.
\item $\langle x,y\rangle=\langle y,x\rangle^*$ for every $x,y\in
\h$.
\item $\langle ax,y\rangle=a\langle x,y\rangle$ for every $a\in
A$, $x,y\in \h$.
\item $\langle x+y,z\rangle=\langle x,z\rangle+\langle y,z\rangle$
for every $x,y,z\in \h$.
\end{enumerate}
The map $x\mapsto\|x\|=\|\langle x,x\rangle\|^\frac{1}{2}$ defines a
norm on $\h$. A pre-Hilbert $A$-module  is called a Hilbert
$A$-module if $\h$ is complete with respect to that norm. So $\h$
becomes the structure of a Banach $A$-module. A Hilbert $A$-module
$\h$ is called countably generated if there exists a countable set
$\{x_n\}_{n\in J}\subseteq \h$ such that the linear span (over
$\Bbb{C}$ and $A$) of this set is norm-dense in $\h$.
\par
Suppose that $\h,\K$ are Hilbert $A$-modules over a $C^*$-algebra
$A$. We define $L(\h,\K)$ to be the set of all maps $T:\h\rightarrow
\K$ for which there is a map $T^*:\K\rightarrow \h$ such that
\begin{eqnarray*}
\langle Tx,y\rangle=\langle x,T^*y\rangle\quad x\in \h, y\in \K.
\end{eqnarray*}
It is easy to see that each $T\in L(\h,\K)$ is $A$-linear and
bounded. $L(\h,\K)$ is called the set of adjointable maps from $\h$
to $\K$. We denote $L(\h,\h)$ by $L(\h)$. In fact $L(\h)$ is a
$C^*$-algebra.
\par

For basic results on Hilbert modules see \cite{BG, M, P}.

Throughout the present paper we suppose that $A$ is a unital
$C^*$-algebra and  $\h$ is a Hilbert $A$-module.
\begin{defn}
Let  $J\subseteq \Bbb{N}$ be a finite or countable index set. A
sequence $\{f_n\}_{n\in J}$ of elements of $\h$ is said to be a
\textit{frame} if there exist two constants $C, D> 0$ such that
\begin{equation}\label{fr}
C \langle x,x\rangle\leqslant \sum_{n\in J}\langle
x,f_n\rangle\langle f_n,x\rangle\leqslant D\langle x,x\rangle,\quad
x\in \h.
\end{equation}
The constants $C$ and $D$ are called the \textit{lower} and
\textit{upper} \textit{frame bounds}, respectively. We consider
\textit{standard frames} for which the sum in the middle of
(\ref{fr}) converges in norm for every $x\in \h$. A frame
$\{f_n\}_{n\in J}$ is said to be a \textit{tight frame}  if $C=D$,
and said to be a \textit{Parseval frame} (or a \textit{normalized
tight frame}) if $C=D=1$. If just the right-hand inequality in
(\ref{fr}) holds,  we say that $\{f_n\}_{n\in J}$ is a
\textit{Bessel sequence} with a \textit{Bessel bound} $D$.
\end{defn}
\par
It follows from the above definition that a sequence $\{f_n\}_{n \in
J}$ is a normalized tight frame if and only if
\[\langle x,x\rangle=\sum_{n\in J}\langle x,f_n\rangle \langle
f_n,x\rangle,\quad x\in \h.\]
\par
\hspace{-0.2cm}{Let $\{f_n\}_{n\in J}$ be a standard frame for $\h$.
The \textit{frame transform }for $\{f_n\}_{n\in J}$ is the map $T:\h
\rightarrow \ell^2(A)$ defined by $Tx=\{\langle x,f_n\rangle\}_{n\in
J}$, where $\ell^2(A)$ denotes a Hilbert $A$-module
{$\{\{a_j\}_{j\in J}:a_j\in A, \sum_j a_ja_j^*~ \mbox{\rm converges
in norm}\}$} with pointwise operations and the inner product
{$\langle \{a_j\}_{j\in J},\{b_j\}_{j\in J}\rangle =\sum_{j\in J}
a_jb_j^*$}}. The adjoint operator $T^*: \ell^2(A)\rightarrow  \h$ is
given by $T^*(\{c_j\}_{j\in J})=\sum_{j\in J}c_jf_j$ (\cite{fl2},
Theorem 4.4). By composing $T$ and $T^*$, we obtain the
\textit{frame operator} $S:\h\to\h$ given by
\[Sx=T^*Tx=\sum_{n\in J}\langle x,f_n\rangle f_n, \quad x\in
\h.\] The frame operator is positive and invertible, also it is the
unique operator in $L(\h )$ such that the reconstruction formula
\begin{eqnarray*}
x=\sum_{n\in J} \langle x,S^{-1}f_n\rangle f_n=\sum_{n\in
J}\langle x,f_n\rangle S^{-1}f_n,
\end{eqnarray*}
holds for all $x\in\h.$ It is easy to see that the sequence
$\{S^{-1}f_n\}_{n\in J}$ is a frame for $\h$. The frame
$\{S^{-1}f_n\}_{n\in J}$  is said to be the \textit{canonical dual
frame} of the frame $\{f_n\}_{n\in J}$.
\par
There exists Hilbert $C^*$-modules admitting no frames (see \cite{Li}). The Kasparov Stabilisation Theorem \cite{Kas} is used in \cite{fl2} to prove that every countably generated Hilbert Module over a unital $C^*$-algebra admits frames.
The following  Proposition gives an equivalent definition of frames
in Hilbert $C^*$--modules.
\begin{prop}\cite{W}
Let $\h$ be a finitely or countably generated Hilbert $A$-module and
$\{f_n\}_{n\in J}$ be a sequence in $\h$. Then $\{f_n\}_{n\in J}$ is
a frame of $\h$ with bounds $C$ and $D$ if and only if
\begin{eqnarray*}
C\|x\|^2\leqslant \Big\|\sum_{n\in J} \langle x,f_n\rangle\langle
f_n,x\rangle\Big\|\leqslant D\|x\|^2,
\end{eqnarray*}
for all $x\in \h$.
\end{prop}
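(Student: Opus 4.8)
The plan is to recast both conditions as statements about the frame operator $S\in L(\h)$, $Sx=\sum_{n\in J}\langle x,f_n\rangle f_n$, which under the convergence implicit in the statement is a well-defined positive element of the $C^*$-algebra $L(\h)$ (indeed $\langle Sx,x\rangle=\sum_{n\in J}\langle x,f_n\rangle\langle f_n,x\rangle\geqslant 0$ for all $x\in\h$). The bridge between the order formulation and the norm formulation is the standard fact that $T\in L(\h)$ is positive if and only if $\langle Tx,x\rangle\geqslant 0$ in $A$ for every $x\in\h$. Applying this to $T=S-CI$ and to $T=DI-S$ shows that the frame inequalities \eqref{fr} hold if and only if $CI\leqslant S\leqslant DI$ in $L(\h)$. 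Hence it remains to prove that these operator inequalities are equivalent to the scalar inequalities $C\norm{x}^2\leqslant\norm{\sum_{n\in J}\langle x,f_n\rangle\langle f_n,x\rangle}\leqslant D\norm{x}^2$.

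For the upper bounds, note that positivity of $S$ gives $\norm{\langle Sx,x\rangle}=\norm{\langle S^{1/2}x,S^{1/2}x\rangle}=\norm{S^{1/2}x}^2$, together with $\norm{x}^2=\norm{\langle x,x\rangle}$. Thus the right-hand scalar inequality is equivalent to $\norm{S^{1/2}x}\leqslant\sqrt{D}\,\norm{x}$ for all $x$, that is $\norm{S^{1/2}}\leqslant\sqrt{D}$, i.e. $\norm{S}\leqslant D$. Since $S$ is a positive element of $L(\h)$, its norm equals $\sup\sigma(S)$, so $\norm{S}\leqslant D$ is equivalent to $\sigma(S)\subseteq[0,D]$, i.e. to $S\leqslant DI$. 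This settles the upper bounds in both directions at once.

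For the lower bounds, the implication $CI\leqslant S\Rightarrow C\norm{x}^2\leqslant\norm{\langle Sx,x\rangle}$ is immediate from $C\langle x,x\rangle\leqslant\langle Sx,x\rangle$ and monotonicity of the norm on positive elements of $A$. The converse is the main obstacle, because recovering an operator order inequality from a scalar norm bound is not automatic over a general $C^*$-algebra. Using $\norm{\langle Sx,x\rangle}=\norm{S^{1/2}x}^2$ once more, the left-hand hypothesis reads $\sqrt{C}\,\norm{x}\leqslant\norm{S^{1/2}x}$ for all $x$, so $S^{1/2}$ is bounded below. The decisive step is then to invoke the standard fact that a self-adjoint adjointable operator on a Hilbert $C^*$-module which is bounded below is invertible (its range is closed and hence orthogonally complemented, while its kernel is trivial). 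Applied to $S^{1/2}$ this produces an invertible square root with $\norm{(S^{1/2})^{-1}}\leqslant 1/\sqrt{C}$, whence $S$ is positive and invertible with $\norm{S^{-1}}\leqslant 1/C$. Therefore $\sigma(S)\subseteq[C,\norm{S}]$, which forces $S\geqslant CI$. Combining this with the upper-bound analysis yields the asserted equivalence.
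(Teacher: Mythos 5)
The paper itself offers no proof of this proposition: it is quoted from Jing's thesis \cite{W}, so there is nothing internal to compare against. Judged on its own, your argument is essentially correct and is the natural route. The reduction of the frame inequalities to $CI\leqslant S\leqslant DI$ via the fact that positivity of $T\in L(\h)$ is equivalent to $\langle Tx,x\rangle\geqslant 0$ for all $x$ (Lance, Lemma 4.1) is sound, and your treatment of the upper bound through $\norm{\langle Sx,x\rangle}=\norm{S^{1/2}x}^2$ and the $C^*$-identity $\norm{S}=\norm{S^{1/2}}^2$ is clean. You also correctly isolate the one genuinely nontrivial step: passing from the scalar lower bound $\sqrt{C}\norm{x}\leqslant\norm{S^{1/2}x}$ back to the operator inequality $S\geqslant CI$. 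The lemma you invoke there is true, but it is worth naming its source precisely: over a general Hilbert $C^*$-module a closed submodule need not be complemented, and what saves the argument is the closed range theorem for adjointable operators (Lance, Theorem 3.2), which gives $\h=\Ke(S^{1/2})\oplus \overline{R(S^{1/2})}=R(S^{1/2})$ once $S^{1/2}$ is bounded below; this is exactly the mechanism behind Aramba\v{s}i\'c's Proposition 2.1, which the paper itself cites for the analogous surjectivity statement.

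Two small presentational gaps you should close. First, in the ``if'' direction you need $S$ to be a well-defined element of $L(\h)$ before you may take $S^{1/2}$; this is not free, but it does follow from the hypotheses, since norm convergence of $\sum_{n\in J}\langle x,f_n\rangle\langle f_n,x\rangle$ means precisely that $\{\langle x,f_n\rangle\}_{n\in J}\in\ell^2(A)$ for every $x$, whence the sequence is Bessel by Proposition \ref{p.n13} and $S=T^*T\in L(\h)$ by Proposition \ref{p.1}. (Note that invoking your own upper-bound computation to get $S\in L(\h)$ would be circular, since that computation already uses $S^{1/2}$.) Second, the step $\norm{S^{-1}}=\norm{(S^{1/2})^{-1}}^2\leqslant 1/C$ silently uses the $C^*$-identity for the positive element $(S^{1/2})^{-1}$; one line suffices, but it should be said. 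With these points made explicit the proof is complete.
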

\par
We recall that  an element $v\in\h$ is said to be a \textit{basic
element} if $e=\langle v,v\rangle$ is a minimal projection in $A$;
that is $eAe=\Bbb{C}e$. A system $\{v_i\}_{i\in J}$ of basic
elements of $\h$ is said to be \textit{orthonormal} if $\langle
v_i,v_j\rangle=0$, for all $i\neq j$; moreover if this orthonormal
system generates a dense submodule of $\h$, then we call it an
\textit{orthonormal basis} for $\h$.
\par
We need the following results to prove our results.
\begin{thm}\cite{fy}\label{t.1}
Let $\F,\h,\K$ be Hilbert $C^*$-modules over a $C^*$-algebra $A$.
Also let $S\in L(\K,\h)$ and $T\in L(\F,\h)$ with
$\overline{R(T^*)}$ orthogonally complemented. The following
statements are equivalent:
\begin{enumerate}
\item $SS^*\leqslant \lambda TT^*$ for some $\lambda>0$;
\item there exists $\mu>0$ such that $\|S^*z\|\leqslant
\mu\|T^*z\|$ for all $z\in \h$;
\item there exists $D\in L(\K,\F)$ such that $S=TD$, i.e.,
$TX=S$ has a solution;
\item $R(S){\subseteq}R(T)$.
\end{enumerate}
\end{thm}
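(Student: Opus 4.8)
The plan is to route all four statements through the operator equation in $(3)$, proving the ``easy'' implications $(3)\Rightarrow(1)\Rightarrow(2)$ and $(3)\Rightarrow(4)$ directly and then recovering $(3)$ from the weaker hypotheses. For $(3)\Rightarrow(1)$, if $S=TD$ then $SS^{*}=T(DD^{*})T^{*}$, and since $DD^{*}\leqslant\|D\|^{2}I$ in $L(\F)$ I may conjugate this inequality by $T$ to obtain $SS^{*}\leqslant\|D\|^{2}TT^{*}$, i.e. $(1)$ with $\lambda=\|D\|^{2}$. For $(1)\Rightarrow(2)$ I would unpack the operator inequality into the $A$-valued inequality $\langle S^{*}z,S^{*}z\rangle=\langle SS^{*}z,z\rangle\leqslant\lambda\langle TT^{*}z,z\rangle=\lambda\langle T^{*}z,T^{*}z\rangle$ and then apply the $C^{*}$-norm, which is monotone on positive elements, obtaining $\|S^{*}z\|^{2}\leqslant\lambda\|T^{*}z\|^{2}$, i.e. $(2)$ with $\mu=\sqrt{\lambda}$. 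Finally $(3)\Rightarrow(4)$ is immediate, since $R(S)=R(TD)\subseteq R(T)$.

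For the substantive direction I would reconstruct $D$ from its prescribed values on the range of $T^{*}$. Assuming $(2)$, the assignment $T^{*}z\mapsto S^{*}z$ is well defined on the (generally non-closed) submodule $R(T^{*})$, because $T^{*}z=0$ forces $\|S^{*}z\|\leqslant\mu\|T^{*}z\|=0$; the same bound makes it $\mu$-Lipschitz, so it extends to a bounded $A$-linear map $\psi$ on $\overline{R(T^{*})}$. Here the hypothesis that $\overline{R(T^{*})}$ be orthogonally complemented enters decisively: it supplies the adjointable projection $P\in L(\F)$ onto $\overline{R(T^{*})}$ together with the splitting $\F=\overline{R(T^{*})}\oplus\Ke(T)$ (recall $\Ke(T)=R(T^{*})^{\perp}$), which lets me extend $\psi$ to all of $\F$ by zero on $\Ke(T)$, forming a bounded $A$-linear map $\Psi:=\psi P$ from $\F$ to $\K$. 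The point is that \emph{if} $\Psi$ is adjointable, then $D:=\Psi^{*}\in L(\K,\F)$ is already the required solution: for every $x$ and every $z$ one computes $\langle z,TDx\rangle=\langle T^{*}z,Dx\rangle=\langle\Psi(T^{*}z),x\rangle=\langle S^{*}z,x\rangle=\langle z,Sx\rangle$, whence $TD=S$ and $(3)$ holds. The same reconstruction, reading off for each $x$ the unique preimage of $Sx$ lying in $\overline{R(T^{*})}$ and invoking the closed graph theorem for boundedness, would handle $(4)\Rightarrow(3)$.

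The step I expect to be the main obstacle is precisely the adjointability of $\Psi$ (equivalently of the solution $D$). Unlike in the Hilbert space case, a bounded $A$-linear map between Hilbert $C^{*}$-modules need not admit an adjoint, and here adjointability of $\Psi$ is in fact \emph{equivalent} to the range inclusion $(4)$, since the adjoint relation forces $Sx=T(\Psi^{*}x)\in R(T)$. This cannot be extracted from $(2)$ by a naive estimate: norm-monotonicity in a $C^{*}$-algebra runs only one way, so $(2)$ does not by itself return the operator inequality $(1)$, and this is exactly why the argument must detour through the operator equation $(3)$ rather than compare $(1)$ and $(2)$ directly. It is the orthogonal complementation of $\overline{R(T^{*})}$ that breaks the impasse, promoting the scalar-level bound to an honest adjointable operator. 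To make this rigorous I would secure the solution through the regularized operators $D_{\eps}:=T^{*}(TT^{*}+\eps I)^{-1}S$, which lie in $L(\K,\F)$ for each $\eps>0$ because $TT^{*}+\eps I$ is invertible in the $C^{*}$-algebra $L(\h)$; using $SS^{*}\leqslant\lambda TT^{*}$ and the continuous functional calculus in $TT^{*}$ one controls $\|TD_{\eps}-S\|$ and the increments of $\{D_{\eps}\}$, so that the limit $D=\lim_{\eps\to0}D_{\eps}$ is an adjointable solution of $TX=S$. Once $(3)$ is reached from each of $(2)$ and $(4)$ in this way, the easy implications $(3)\Rightarrow(1)\Rightarrow(2)$ and $(3)\Rightarrow(4)$ close all four statements into a single equivalence.
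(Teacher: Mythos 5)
The paper itself contains no proof of this theorem --- it is imported verbatim from \cite{fy} --- so your attempt has to stand entirely on its own. The routine implications are fine: $(3)\Rightarrow(1)$ by conjugating $DD^*\leqslant\|D\|^2I$ with $T$, $(1)\Rightarrow(2)$ by evaluating the operator inequality at $z$ and using that the norm of $A$ is monotone on positive elements, and $(3)\Rightarrow(4)$ trivially. You also correctly isolate the real difficulty: the bounded $A$-linear map $\Psi$ obtained by extending $T^*z\mapsto S^*z$ over $\F=\overline{R(T^*)}\oplus\Ke(T)$ need not be adjointable, and adjointability is exactly what the hypotheses must deliver.

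The problem is that the device you actually propose for delivering it fails. The family $D_\eps=T^*(TT^*+\eps I)^{-1}S$ is not norm-Cauchy as $\eps\to0$ whenever $0$ is a non-isolated point of $\sigma(TT^*)$. Take $A=\Bbb{C}$, $\F=\K=\h=\ell^2$ and $S=T=\mathrm{diag}(1/n)$: every hypothesis holds with $\lambda=1$ and $X=I$ solves $TX=S$, yet by functional calculus $\|D_\eps-D_{4\eps}\|=\sup_n \frac{3\eps\, n^{-2}}{(n^{-2}+\eps)(n^{-2}+4\eps)}$, which equals $\tfrac13$ for $\eps=\eps_n=\tfrac12 n^{-2}$; since $\eps_n\to 0$, no norm limit exists. (The estimate $SS^*\leqslant\lambda TT^*$ only yields $\|D_\eps-D_{\eps'}\|\leqslant\sqrt{\lambda}\,\bigl|\sqrt{\eps}-\sqrt{\eps'}\bigr|/\bigl(\sqrt{\eps}+\sqrt{\eps'}\bigr)$, which does not tend to $0$; one does get $\|TD_\eps-S\|\leqslant\tfrac12\sqrt{\lambda\eps}\to 0$, but that is not enough.) Note also that the regularization never uses the hypothesis that $\overline{R(T^*)}$ is orthogonally complemented --- the hypothesis you yourself call decisive --- which is already a sign the route cannot close the argument. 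The same gap infects your $(4)\Rightarrow(3)$: the closed graph theorem produces a bounded module map $D$ with $TD=S$, but, as you observe earlier in your own write-up, bounded module maps between Hilbert $C^*$-modules need not be adjointable, so this does not place $D$ in $L(\K,\F)$. What is missing in both cases is the actual argument that complementability of $\overline{R(T^*)}$ forces the candidate solution (equivalently, the extension $\Psi$) to possess an adjoint; that is the substance of the theorem in \cite{fy}, and the proposal does not supply it.
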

\begin{prop}\cite{W}\label{p.1}
Let $\{f_n\}_{n\in J}$ be a sequence of a finitely or countably
generated Hilbert $C^*$-module $\h$ over a unital $C^*$-algebra $A$.
Then the following statements are mutually equivalent:
\begin{enumerate}
\item $\{f_n\}_{n\in J}$ is a Bessel sequence for $\h$ with bound
$D$.
\item $\Big\|\sum_{n\in J}\langle x,f_n\rangle\langle
f_n,x\rangle\Big\|\leqslant D\|x\|^2,\quad x\in \h$.
\item $\theta:\ell^2(A)\rightarrow \h$ defined by
\begin{eqnarray*}
\theta(\{c_n\}_{n\in J})=\sum_{n\in J} c_nf_n.
\end{eqnarray*}
is a well-defined bounded operator with $\|\theta\|\leqslant
\sqrt{D}$.
\item $T:\h \rightarrow \ell^2(A)$ defined by $Tx=\{\langle
x,f_n\rangle\}_{n\in J}$ is adjointable and $T^*=\theta$.
\end{enumerate}
\end{prop}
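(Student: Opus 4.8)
The plan is to establish the four statements as equivalent by running the cycle $(1)\Rightarrow(2)\Rightarrow(3)\Rightarrow(4)\Rightarrow(1)$, which keeps the constants matched: the order-theoretic Bessel bound $D$ in $(1)$, the numerical bound $D$ in $(2)$, the operator-norm bound $\sqrt{D}$ on the synthesis map $\theta$ in $(3)$, and finally the optimal value $\|\theta\|^2=\|T\|^2$ recovered in $(1)$. Throughout I would rely on three elementary Hilbert module facts: that for positive $a,b\in A$ one has $0\le a\le b\Rightarrow\|a\|\le\|b\|$; that $\|y\|=\sup_{\|x\|\le 1}\|\langle y,x\rangle\|$ for every $y\in\h$; and the Cauchy--Schwarz inequality $\|\langle u,v\rangle\|^2\le\|\langle u,u\rangle\|\,\|\langle v,v\rangle\|$ in $\ell^2(A)$.

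The implication $(1)\Rightarrow(2)$ is immediate: the (standard) Bessel inequality gives $0\le\sum_n\langle x,f_n\rangle\langle f_n,x\rangle\le D\langle x,x\rangle$ in $A$, and applying the $C^*$-norm to this order relation yields $\|\sum_n\langle x,f_n\rangle\langle f_n,x\rangle\|\le D\|\langle x,x\rangle\|=D\|x\|^2$. For $(2)\Rightarrow(3)$ I would first treat a finitely supported $c=\{c_n\}_{n\in F}$, where $\theta(c)=\sum_{n\in F}c_nf_n$ is manifestly defined, and use the norm identity to write $\|\theta(c)\|=\sup_{\|x\|\le 1}\|\sum_{n\in F}c_n\langle f_n,x\rangle\|$. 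Recognizing $\sum_{n\in F}c_n\langle f_n,x\rangle$ as the pairing $\langle\{c_n\},\{\langle x,f_n\rangle\}\rangle_{\ell^2(A)}$ and invoking Cauchy--Schwarz together with $(2)$ gives $\|\theta(c)\|^2\le\|c\|^2\,\|\sum_{n\in F}\langle x,f_n\rangle\langle f_n,x\rangle\|\le D\|c\|^2$, so $\theta$ is bounded by $\sqrt{D}$ on the dense submodule of finitely supported sequences and, by completeness of $\h$, extends to all of $\ell^2(A)$ with $\|\theta\|\le\sqrt{D}$ and $\theta(c)=\sum_n c_nf_n$ convergent. For the closing step $(4)\Rightarrow(1)$ I would set $S=T^*T=\theta T\in L(\h)$, which is positive with $\langle Sx,x\rangle=\langle Tx,Tx\rangle=\sum_n\langle x,f_n\rangle\langle f_n,x\rangle$; since $S\le\|S\|\,I=\|\theta\|^2 I$ (here $I$ is the identity on $\h$), this produces the Bessel inequality with bound $\|\theta\|^2\le D$.

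The delicate step, and the one I expect to be the main obstacle, is $(3)\Rightarrow(4)$. On a finitely supported $c$ one verifies formally that $\langle\theta(c),x\rangle=\sum_n c_n\langle f_n,x\rangle=\langle c,Tx\rangle_{\ell^2(A)}$, which is exactly the relation identifying $T$ as $\theta^*$; the genuine content is showing that $Tx=\{\langle x,f_n\rangle\}$ actually lies in $\ell^2(A)$, i.e. that $\sum_n\langle x,f_n\rangle\langle f_n,x\rangle$ converges in norm. Testing $\theta$ against $c^{(F)}=\{\langle x,f_n\rangle\}_{n\in F}$ gives $\langle\theta(c^{(F)}),x\rangle=\sum_{n\in F}\langle x,f_n\rangle\langle f_n,x\rangle=:S_F$ with $\|c^{(F)}\|^2=\|S_F\|$, so boundedness of $\theta$ forces $\|S_F\|\le\|\theta\|\,\|x\|\,\|S_F\|^{1/2}$ and hence the uniform estimate $\|S_F\|\le\|\theta\|^2\|x\|^2$ over all finite $F$. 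The crux is that, in contrast to the Hilbert space case, norm-boundedness of the synthesis operator does not by itself force the increasing net of positive partial sums $\{S_F\}$ to converge in norm — equivalently, a bounded module map need not be adjointable. I would close this gap by exploiting the positivity of $\{S_F\}$ together with the standing standardness hypothesis (norm-convergence of the defining series) on the sequences under consideration, and then pass to the limit in the adjoint relation to conclude that $T$ is adjointable with $T^*=\theta$; reconciling the bounds $D\leftrightarrow\sqrt{D}\leftrightarrow\|\theta\|^2$ then completes the cycle.
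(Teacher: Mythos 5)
The paper does not actually prove this proposition --- it is quoted from \cite{W} --- so the only in-paper point of comparison is the argument the authors give for the closely related Proposition \ref{p.n12}. Your cycle $(1)\Rightarrow(2)\Rightarrow(3)\Rightarrow(4)\Rightarrow(1)$ is a sound architecture, and the three steps $(1)\Rightarrow(2)$, $(2)\Rightarrow(3)$ and $(4)\Rightarrow(1)$ are correct as you describe them: the order-to-norm passage, the Cauchy--Schwarz estimate on finitely supported sequences followed by extension by density, and the positivity of $T^*T$ together with $\langle T^*Tx,x\rangle\leqslant\|T\|^2\langle x,x\rangle$ (Proposition \ref{p.2}) all go through.

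The genuine gap is exactly where you locate it, in $(3)\Rightarrow(4)$, and the repair you sketch does not work. You must show that $Tx=\{\langle x,f_n\rangle\}_{n\in J}$ actually lies in $\ell^2(A)$, i.e.\ that $\sum_{n\in J}\langle x,f_n\rangle\langle f_n,x\rangle$ converges in norm; as you correctly observe, the uniform bound $\|S_F\|\leqslant\|\theta\|^2\|x\|^2$ on the partial sums does not give this, since a bounded increasing net of positive elements of a $C^*$-algebra need not converge in norm. But ``exploiting the positivity of $\{S_F\}$ together with the standing standardness hypothesis'' is circular: statement $(3)$ carries no standardness hypothesis on $\{\langle x,f_n\rangle\}_{n\in J}$ --- norm convergence of that series is precisely what you are trying to establish. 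The tool that closes the gap is Proposition \ref{p.n1}: since $\theta$ is well defined on all of $\ell^2(A)$, the series $\sum_{n\in J}c_n\langle f_n,x\rangle=\langle\theta(c),x\rangle$ converges for every $c\in\ell^2(A)$ and every $x\in\h$, and Proposition \ref{p.n1}, applied with $a_n=\langle x,f_n\rangle$, then forces $\{\langle x,f_n\rangle\}_{n\in J}\in\ell^2(A)$. This is exactly how the authors argue in their proof of Proposition \ref{p.n12}. Once $Tx\in\ell^2(A)$ is secured, your limiting argument for the adjoint identity $\langle\theta(c),x\rangle=\langle c,Tx\rangle$ is fine and the cycle closes.
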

\begin{prop}\cite{P}\label{p.2}
Let $\h$ be a Hilbert $C^*$-module. If $T\in L(\h)$, then $\langle
Tx,Tx\rangle\leqslant \|T\|^2\langle x,x\rangle$ for every $x\in
\h$.
\end{prop}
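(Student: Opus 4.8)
The plan is to read the asserted inequality as a statement about positivity in the $C^*$-algebra $A$: writing it as $\|T\|^2\langle x,x\rangle-\langle Tx,Tx\rangle\geqslant 0$, I want to exhibit this element of $A$ as something manifestly positive, namely a value $\langle y,y\rangle$ of the inner product. The first move is to use adjointability to bring $T$ onto one side. Since $T\in L(\h)$ has an adjoint $T^*$ with $(T^*)^*=T$, for each $x\in\h$ one has $\langle Tx,Tx\rangle=\langle T^*Tx,x\rangle$. Hence the claim is equivalent to $\langle(\|T\|^2 I-T^*T)x,x\rangle\geqslant 0$ for all $x$, where $I$ denotes the identity of $L(\h)$.

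Next I would pass to the level of operators and work inside the $C^*$-algebra $L(\h)$. The element $S:=T^*T$ is self-adjoint, since $S^*=T^*(T^*)^*=T^*T=S$, and it is positive because it already has the form $R^*R$ (with $R=T$). By the $C^*$-identity, $\|S\|=\|T^*T\|=\|T\|^2$. Now I invoke the standard spectral fact that a positive element $a$ of a unital $C^*$-algebra satisfies $a\leqslant\|a\|\,1$: indeed $\sigma(a)\subseteq[0,\|a\|]$, so $\sigma(\|a\|\,1-a)\subseteq[0,\infty)$ and $\|a\|\,1-a$ is positive. Applying this with $a=S$ yields $\|T\|^2 I-T^*T\geqslant 0$ in $L(\h)$.

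Finally I transfer this operator positivity back to the module inner product. A positive element $P$ of $L(\h)$ can be written as $P=R^*R$ for some $R\in L(\h)$ (for instance $R=P^{1/2}$), and then the inner-product axiom gives $\langle Px,x\rangle=\langle R^*Rx,x\rangle=\langle Rx,Rx\rangle\geqslant 0$ in $A$ for every $x$. Applying this to $P=\|T\|^2 I-T^*T$ gives $\|T\|^2\langle x,x\rangle-\langle Tx,Tx\rangle\geqslant 0$, which is the assertion.

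I expect the main obstacle to be the bridge in the last paragraph: one must be certain that the order on $L(\h)$ coming from its $C^*$-structure (equivalently, from the functional calculus or the $R^*R$ representation) really does match the pointwise inequalities of the $A$-valued inner product. The $R^*R$ factorization of positive operators is exactly what makes this transfer clean, so the only genuinely nontrivial inputs are that $T^*T$ is a bona fide positive element and the elementary spectral estimate $a\leqslant\|a\|\,1$.
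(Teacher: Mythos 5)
The paper states this proposition without proof, simply citing Paschke \cite{P}, so there is no in-paper argument to compare against; your proof is correct and is essentially the standard one found in the cited sources (and in Lance \cite{L}). Reducing the claim to positivity of $\|T\|^2I-T^*T$ in the $C^*$-algebra $L(\h)$ via $a\leqslant\|a\|1$, and then transferring back to the $A$-valued inner product through the square-root factorization $P=R^*R$ so that $\langle Px,x\rangle=\langle Rx,Rx\rangle\geqslant 0$, is exactly the right bridge, and you have correctly identified it as the only nontrivial step.
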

\begin{prop}\cite{W}\label{p.n1}
Let $B$ be a $C^*$-algebra and $\{a_n\}_{n\in J}$ a sequence in $B$.
If $\sum_{n\in J}a_nb^*_n$ converges for all $\{b_n\}_{n\in
J}\in\ell^2(B)$, then $\{a_n\}_{n\in J}\in\ell^2(B)$.
\end{prop}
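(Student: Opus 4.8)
The plan is to reduce the statement to the uniform boundedness principle by viewing the partial sums of $\sum_{n\in J}a_nb_n^*$ as a family of bounded operators on the Banach space $\ell^2(B)$. For each finite subset $F\subseteq J$ I would define $T_F:\ell^2(B)\to B$ by $T_F(\{b_n\})=\sum_{n\in F}a_nb_n^*$. Writing $a^F$ for the finitely supported sequence that agrees with $\{a_n\}$ on $F$ and is zero elsewhere, we have $a^F\in\ell^2(B)$ and $T_F(\{b_n\})=\langle a^F,\{b_n\}\rangle$, so each $T_F$ is linear and bounded. First I would record that each $T_F$ has a clean norm: by the Cauchy--Schwarz inequality for Hilbert $C^*$-modules (equivalently, the estimate behind Propositions \ref{p.1} and \ref{p.2}) one gets $\|T_F(\{b_n\})\|\le\|a^F\|\,\|\{b_n\}\|$, while testing against $\{b_n\}=a^F$ gives $T_F(a^F)=\langle a^F,a^F\rangle=\sum_{n\in F}a_na_n^*$, whose norm equals $\|a^F\|^2$. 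Hence
\[\|T_F\|=\|a^F\|=\Big\|\sum_{n\in F}a_na_n^*\Big\|^{1/2}.\]

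The second step is to feed the hypothesis into Banach--Steinhaus. Fixing $\{b_n\}\in\ell^2(B)$, the series $\sum_{n\in J}a_nb_n^*$ converges by assumption, so the net $\{T_F(\{b_n\})\}_F$ is bounded in $B$; that is, $\sup_F\|T_F(\{b_n\})\|<\infty$ for every $\{b_n\}$. Since $\ell^2(B)$ is a Banach space and the $T_F$ are bounded, the uniform boundedness principle yields $M:=\sup_F\|T_F\|<\infty$, and by the norm formula above this says exactly that
\[\Big\|\sum_{n\in F}a_na_n^*\Big\|\le M^2\qquad\text{for every finite }F\subseteq J.\]
Thus the increasing net of partial sums of $\sum_{n\in J}a_na_n^*$, which consists of positive elements of $B$, is uniformly norm-bounded.

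The remaining --- and decisive --- step is to promote this uniform bound on the partial sums to genuine norm convergence of $\sum_{n\in J}a_na_n^*$, which is what membership in $\ell^2(B)$ demands. I would argue by contradiction: if the partial sums were not norm-Cauchy, then (after fixing an enumeration of the countable set $J$) there would be $\eps>0$ and consecutive finite blocks $I_1,I_2,\dots$ with $\|\sum_{n\in I_k}a_na_n^*\|>\eps$ for all $k$. The idea is to manufacture a single test sequence that violates the hypothesis: set $b_n=\lambda_k a_n$ for $n\in I_k$, with scalars $\lambda_k>0$ chosen so that $\sum_k\lambda_k^2<\infty$. Using the uniform bound $M^2$ together with positivity of the blocks $\sum_{n\in I_k}a_na_n^*$ (so that $0\le x\le y$ forces $\|x\|\le\|y\|$), one checks that $\{b_n\}\in\ell^2(B)$, while the associated series $\sum_{n\in J}a_nb_n^*=\sum_k\lambda_k\big(\sum_{n\in I_k}a_na_n^*\big)$ should be arranged to fail the Cauchy criterion, contradicting the hypothesis. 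I expect this last step to be the main obstacle: in a general noncommutative $C^*$-algebra the norm of a sum of positive elements lies only between the largest summand and the sum of the norms, so coaxing the block sums into genuine divergence requires a careful choice of the $\lambda_k$ and repeated use of positivity. This is precisely the point at which the full force of the hypothesis --- convergence for \emph{every} $\{b_n\}\in\ell^2(B)$, rather than mere boundedness of the operators $T_F$ --- must be brought to bear.
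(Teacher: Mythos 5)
The paper offers no argument for this proposition---it is quoted from Jing's thesis \cite{W}---so there is no internal proof to measure yours against; I can only assess the proposal on its own terms. Your first two steps are fine: each $T_F$ is bounded with $\|T_F\|=\big\|\sum_{n\in F}a_na_n^*\big\|^{1/2}$, and the uniform boundedness principle converts the pointwise convergence hypothesis into the uniform bound $\sup_F\big\|\sum_{n\in F}a_na_n^*\big\|\leq M^2$ (strictly speaking the hypothesis gives boundedness of the ordered partial sums $\sum_{n=1}^N a_nb_n^*$, but positivity of the terms $a_na_n^*$ lets you pass to arbitrary finite $F$).

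The decisive third step, which you correctly identified as the main obstacle, is a genuine gap, and not one that a cleverer choice of the $\lambda_k$ can close. Let $B$ be the $C^*$-algebra of all bounded operators on a separable infinite-dimensional Hilbert space and let $a_n=e_n$ be pairwise orthogonal rank-one projections. For any blocking $I_1,I_2,\dots$ the block sums $P_k=\sum_{n\in I_k}a_na_n^*$ are pairwise orthogonal projections of norm $1>\eps$, yet for any null sequence $\lambda_k\geq 0$ one has $\big\|\sum_{k=K}^{L}\lambda_kP_k\big\|=\max_{K\leq k\leq L}\lambda_k\to 0$, so your test series $\sum_{n}a_nb_n^*=\sum_k\lambda_kP_k$ converges in norm no matter how the $\lambda_k$ are chosen: the desired contradiction never appears. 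Worse, the same example shows the implication cannot be rescued by a different argument under the literal reading of the statement: by Cauchy--Schwarz, $\big\|\sum_{n\in F}e_nb_n^*\big\|^2\leq\big\|\sum_{n\in F}e_n\big\|\,\big\|\sum_{n\in F}b_nb_n^*\big\|\leq\big\|\sum_{n\in F}b_nb_n^*\big\|$, so $\sum_ne_nb_n^*$ converges in norm for every $\{b_n\}\in\ell^2(B)$, while $\sum_ne_ne_n^*=\sum_ne_n$ is not norm convergent, i.e.\ $\{e_n\}\notin\ell^2(B)$ as this paper defines it. So either the convergence in \cite{W} is meant in a weaker (weak or strict) sense, or an extra hypothesis on $B$ is intended; in any case the honest output of your argument is only the uniform bound $\sup_F\big\|\sum_{n\in F}a_na_n^*\big\|<\infty$, not membership in $\ell^2(B)$.
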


\newpage
In \cite{G}, L. G\u avru\c ta, presented a generalization of frames, named $K$-frames, which allows to reconstruct elements from the range of a linear and bounded operator in a Hilbert space. She also introduced the concept of atomic system for operators and gave new results and properties of $K$-frames in Hilbert spaces. See also \cite{X}.

In the present paper, we extend these results for frames in $C^*$-Hilbert modules.
\vspace{1cm}
\section{{\textbf ATOMIC SYSTEMS IN HILBERT $C^*$-MODULES}}

Let $J\subseteq\Bbb{N}$ be a finite or countable index set.
\begin{defn}\label{d.1}
A sequence  $\{f_n\}_{n\in J}$ of $\h$ is called an {\it atomic
system} for $K\in L(\h)$ if the following statements hold:
\begin{enumerate}
\item the series $\sum_{n\in J} c_nf_n$ converges for all $c=\{c_n\}_{n\in J}\in
\ell^2(A)$;
\item there exists $C>0$ such that for every $x\in \h$ there exists
$\{a_{n,x}\}_{n\in J}\in \ell^2(A)$ such that $\sum_{n\in J}
a_{n,x}a_{n,x}^*\leqslant C\langle x,x\rangle$ and $Kx=\sum_{n\in J}
a_{n,x}f_n$.
\end{enumerate}
\end{defn}
\begin{prop}\label{p.n12}
Let $\{f_n\}_{n\in J}$ be a sequence in $\h$ such that $\sum_{n\in
J} c_nf_n$ converges for all $c=\{c_n\}_{n\in J}\in \ell^2(A)$. Then
$\{f_n\}_{n\in J}$ is a Bessel sequence in $\h$.
\end{prop}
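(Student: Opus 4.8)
The plan is to show that the synthesis map $\theta(\{c_n\}_{n\in J}) = \sum_{n\in J} c_n f_n$ is a \emph{bounded} operator from $\ell^2(A)$ into $\h$, and then to invoke Proposition \ref{p.1} (the implication $(3)\Rightarrow(1)$) to conclude that $\{f_n\}_{n\in J}$ is a Bessel sequence. By hypothesis $\theta$ is everywhere defined on $\ell^2(A)$ and it is clearly $\mathbb{C}$-linear (indeed $A$-linear), so boundedness is the only thing that remains to be established.

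First I would record the contractivity of the coordinate evaluations. For $c = \{c_n\}_{n\in J} \in \ell^2(A)$ one has $c_n c_n^* \le \sum_{m\in J} c_m c_m^*$, all summands being positive, whence by the $C^*$-identity and monotonicity of the norm on positive elements $\|c_n\|^2 = \|c_n c_n^*\| \le \big\|\sum_m c_m c_m^*\big\| = \|c\|^2$, i.e. $\|c_n\| \le \|c\|$. Using that $\h$ is a Banach $A$-module (so $\|a x\| \le \|a\|\,\|x\|$), it follows that each partial-sum operator $\theta_N(c) := \sum_{n\in J,\, n\le N} c_n f_n$ is a bounded $\mathbb{C}$-linear map with $\|\theta_N\| \le \sum_{n\le N} \|f_n\| < \infty$.

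Next I would apply the uniform boundedness principle. Both $\ell^2(A)$ and $\h$ are complete normed spaces, so Banach--Steinhaus applies to the family $\{\theta_N\}$. The hypothesis that $\sum_{n\in J} c_n f_n$ converges for every $c$ means precisely that $\theta_N(c) \to \theta(c)$ in $\h$ for each fixed $c$; in particular $\sup_N \|\theta_N(c)\| < \infty$ for each $c$, since a convergent sequence in a normed space is bounded. Banach--Steinhaus then yields $M := \sup_N \|\theta_N\| < \infty$, and letting $N\to\infty$ in $\|\theta_N(c)\| \le M \|c\|$ gives $\|\theta(c)\| \le M\|c\|$. Thus $\theta$ is a well-defined bounded operator.

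Finally, Proposition \ref{p.1} applied with $D = M^2$ (the implication $(3) \Rightarrow (1)$) shows that $\{f_n\}_{n\in J}$ is a Bessel sequence with bound $M^2$. The only point worth watching is the passage from ``convergence of the series for every $c$'' to ``pointwise boundedness of the partial sums,'' but this is automatic because convergent sequences are bounded, so no separate estimate is needed. The substance of the argument is entirely the uniform boundedness principle; the module structure enters only through the elementary contractivity $\|c_n\|\le\|c\|$ and the submultiplicativity of the $A$-action.
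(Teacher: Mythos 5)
Your argument is correct, but it reaches the conclusion by a genuinely different route than the paper. The paper does not use the uniform boundedness principle here: it pairs the convergent series $\sum_{n}c_nf_n$ with an arbitrary $x\in\h$ to see that $\sum_{n}c_n\langle f_n,x\rangle$ converges for every $c\in\ell^2(A)$, invokes Proposition \ref{p.n1} to conclude that $\{\langle x,f_n\rangle\}_{n\in J}\in\ell^2(A)$ for each $x$, and then observes that the everywhere-defined synthesis map $T(\{c_n\}_{n\in J})=\sum_{n}c_nf_n$ therefore admits the everywhere-defined adjoint $T^*x=\{\langle x,f_n\rangle\}_{n\in J}$; adjointability forces boundedness, and boundedness of $T^*$ yields the Bessel bound (essentially via Proposition \ref{p.2}). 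You instead prove boundedness of the synthesis operator directly by Banach--Steinhaus applied to the partial-sum operators $\theta_N$ --- a purely Banach-space argument, correctly prepared by the estimates $\|c_n\|\le\|c\|$ and $\|\theta_N\|\le\sum_{n\le N}\|f_n\|$ --- and then hand the remaining work to the implication $(3)\Rightarrow(1)$ of Proposition \ref{p.1}. The trade-off is where the module-theoretic difficulty is absorbed: in your version it sits inside $(3)\Rightarrow(1)$ of Proposition \ref{p.1}, which is precisely the nontrivial assertion that a bounded synthesis operator of this form is adjointable with $\{\langle x,f_n\rangle\}_{n\in J}\in\ell^2(A)$ (note that proposition is stated for finitely or countably generated modules over a unital algebra, so you are implicitly relying on its standing hypotheses); in the paper's version it is absorbed by Proposition \ref{p.n1}, which delivers $\{\langle x,f_n\rangle\}_{n\in J}\in\ell^2(A)$ up front with no boundedness argument at all. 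Your route has the merit of producing an explicit quantitative bound $D=M^2$, and it is in the same spirit as the paper's own proof of Proposition \ref{p.n13}, which likewise appeals to uniform boundedness.
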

\begin{proof}
It is clear that $\sum_{n\in J} c_n\langle f_n,x\rangle$ converges
for all $c=\{c_n\}_{n\in J}\in \ell^2(A)$ and all $x\in\h$. Hence
$\{\langle x,f_n\rangle\}_{n\in J}\in \ell^2(A)$ by Proposition
\ref{p.n1}. Let us define $T:\ell^2(A)\to \h$ by $T(\{c_n\})_{n\in
J}=\sum_{n\in J} c_nf_n$. Therefore $T$ is bounded and the adjoint
operator is given by
\[T^*:\h\to\ell^2(A),\quad T^*(x)=\{\langle x,f_n\rangle\}_{n\in J}.\]
Since $T^*$ is bounded, we get that $\{f_n\}_{n\in J}$ is a Bessel
sequence in $\h$.
\end{proof}
\begin{prop}\label{p.n13}
Let $\{f_n\}_{n\in J}$ be a sequence in $\h$. Then $\{f_n\}_{n\in
J}$ is a Bessel sequence in $\h$ if and only if $\{\langle
x,f_n\rangle\}_{n\in J}\in \ell^2(A)$, for all $x\in\h$.
\end{prop}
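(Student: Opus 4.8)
The plan is to treat the two implications separately: the forward direction is essentially an unwinding of definitions, while the reverse direction rests on the closed graph theorem applied to the analysis map $x\mapsto\{\langle x,f_n\rangle\}_{n\in J}$.

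For the forward implication, suppose $\{f_n\}_{n\in J}$ is a (standard) Bessel sequence with bound $D$. By the definition of a standard Bessel sequence, the sum $\sum_{n\in J}\langle x,f_n\rangle\langle f_n,x\rangle$ converges in norm for every $x\in\h$. Since $\langle f_n,x\rangle=\langle x,f_n\rangle^*$, this says precisely that $\sum_{n\in J}\langle x,f_n\rangle\langle x,f_n\rangle^*$ converges in norm, which by the definition of $\ell^2(A)$ is exactly the membership $\{\langle x,f_n\rangle\}_{n\in J}\in\ell^2(A)$.

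For the reverse implication, I would assume $\{\langle x,f_n\rangle\}_{n\in J}\in\ell^2(A)$ for every $x\in\h$ and define the $A$-linear (hence $\Bbb{C}$-linear) map $T:\h\to\ell^2(A)$ by $Tx=\{\langle x,f_n\rangle\}_{n\in J}$, which is well defined by hypothesis. I would show $T$ is bounded by verifying that its graph is closed and invoking the closed graph theorem, both $\h$ and $\ell^2(A)$ being Banach spaces. Suppose $x_k\to x$ in $\h$ and $Tx_k\to y=\{y_n\}_{n\in J}$ in $\ell^2(A)$. For each fixed $n$, monotonicity of the $C^*$-norm on positive elements gives $\|\langle x_k,f_n\rangle-y_n\|^2=\|(\langle x_k,f_n\rangle-y_n)(\langle x_k,f_n\rangle-y_n)^*\|\leqslant\|Tx_k-y\|^2$, so $\langle x_k,f_n\rangle\to y_n$. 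On the other hand the Cauchy--Schwarz inequality yields $\|\langle x_k,f_n\rangle-\langle x,f_n\rangle\|\leqslant\|x_k-x\|\,\|f_n\|\to0$, whence $y_n=\langle x,f_n\rangle$ for all $n$, i.e. $y=Tx$. Thus the graph is closed and $T$ is bounded.

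Finally, for each $x\in\h$ I would compute $\big\|\sum_{n\in J}\langle x,f_n\rangle\langle f_n,x\rangle\big\|=\|Tx\|^2\leqslant\|T\|^2\|x\|^2$, so by Proposition \ref{p.1} (the equivalence of the Bessel property with the norm estimate in its condition (2)) the sequence $\{f_n\}_{n\in J}$ is Bessel with bound $\|T\|^2$. The main obstacle is the reverse direction, and specifically the closedness of the graph; the crux is the observation that norm convergence in $\ell^2(A)$ forces coordinatewise convergence, which follows from $a_na_n^*\leqslant\sum_{j\in J}a_ja_j^*$ together with monotonicity of the $C^*$-norm. Once this is in hand, continuity of the inner product closes the argument.
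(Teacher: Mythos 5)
Your proof is correct, but it reaches the reverse implication by a genuinely different route from the paper. The paper disposes of the converse in one line by appealing to the Uniform Boundedness Principle: implicitly one considers the truncated analysis maps $T_Fx=\{\langle x,f_n\rangle\}_{n\in F}$ over finite subsets $F\subseteq J$, each of which is bounded, observes that the hypothesis $\{\langle x,f_n\rangle\}_{n\in J}\in\ell^2(A)$ makes this family pointwise bounded, and concludes a uniform bound, which is the Bessel estimate. You instead define the full analysis operator $T$ at once (well defined by hypothesis) and prove it bounded via the closed graph theorem, establishing closedness of the graph from coordinatewise convergence in $\ell^2(A)$ (your inequality $a_na_n^*\leqslant\sum_{j\in J}a_ja_j^*$ plus monotonicity of the norm on positives) together with continuity of the inner product; both steps are valid. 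The two arguments are cousins --- both rest on Baire-category-type theorems --- and are of comparable length once the paper's one-liner is unpacked; yours avoids any discussion of truncations and suprema, while the UBP route avoids the graph-closedness verification. One small caveat: your final step, passing from $\bigl\|\sum_{n\in J}\langle x,f_n\rangle\langle f_n,x\rangle\bigr\|\leqslant\|T\|^2\|x\|^2$ to the Bessel inequality in $A$, invokes the nontrivial implication $(2)\Rightarrow(1)$ of Proposition \ref{p.1}, which is stated only for finitely or countably generated modules; the paper's own neighbouring arguments (e.g.\ Proposition \ref{p.n12}) lean on the same implication, so this matches the paper's level of rigor, but it is worth noting that this is where the remaining content hides.
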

\begin{proof}
It is clear that if $\{f_n\}_{n\in J}$ is a Bessel sequence in $\h$,
then $\{\langle x,f_n\rangle\}_{n\in J}\in \ell^2(A)$, for all
$x\in\h$. The converse follows from the Uniform Boundedness
Principle.
\end{proof}

In the following, we suppose that $\mathcal{H}$ is finite or countable generated Hilbert $C^*$-module.
\begin{thm}
 If $K\in L(\h)$, then there exists an atomic system for $K$.
\end{thm}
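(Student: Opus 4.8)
The plan is to build the atomic system directly from a frame of $\h$ together with the reconstruction formula. Since $\h$ is finitely or countably generated over the unital $C^*$-algebra $A$, it admits a standard frame $\{g_n\}_{n\in J}$, say with bounds $C_0,D_0>0$ and frame operator $S$; this is exactly the point where the Kasparov Stabilisation Theorem (via Frank--Larson) enters, and it is the only genuinely deep ingredient. I would then set $f_n := K g_n$ and claim that $\{f_n\}_{n\in J}$ is an atomic system for $K$ in the sense of Definition \ref{d.1}.

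For condition (1), note that $\{g_n\}_{n\in J}$ is in particular a Bessel sequence, so by Proposition \ref{p.1} the synthesis map $\{c_n\}\mapsto\sum_n c_n g_n$ is a well-defined bounded operator on $\ell^2(A)$. Since $K$ is bounded and $A$-linear, $\sum_n c_n f_n = \sum_n c_n K g_n = K\big(\sum_n c_n g_n\big)$ converges for every $\{c_n\}\in\ell^2(A)$; the interchange of $K$ with the series is justified by applying $K$ to the partial sums and passing to the limit using continuity of $K$.

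For condition (2), I would use the canonical dual frame $\{S^{-1}g_n\}_{n\in J}$ and set $a_{n,x} := \langle x, S^{-1}g_n\rangle$ for each $x\in\h$. Since $\{S^{-1}g_n\}_{n\in J}$ is itself a frame (hence Bessel), Proposition \ref{p.n13} gives $\{a_{n,x}\}_{n\in J}\in\ell^2(A)$, and its upper frame bound $1/C_0$ yields $\sum_n a_{n,x}a_{n,x}^* = \sum_n \langle x, S^{-1}g_n\rangle\langle S^{-1}g_n, x\rangle \leq \frac{1}{C_0}\langle x,x\rangle$, so one may take $C=1/C_0$. Finally, the reconstruction formula $x=\sum_n \langle x, S^{-1}g_n\rangle g_n$ together with the boundedness of $K$ gives $\sum_n a_{n,x} f_n = K\big(\sum_n \langle x, S^{-1}g_n\rangle g_n\big) = Kx$, completing the verification.

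I expect no serious obstacle beyond correctly invoking frame existence: once a frame and its dual are in hand, every step reduces to the reconstruction formula, the Bessel property (Propositions \ref{p.1} and \ref{p.n13}), and continuity of $K$. Indeed, a slightly slicker route shows that the frame $\{g_n\}_{n\in J}$ itself is already an atomic system for $K$: taking $a_{n,x}=\langle Kx, S^{-1}g_n\rangle$ and using Proposition \ref{p.2} gives $\sum_n a_{n,x}a_{n,x}^* \leq \frac{1}{C_0}\langle Kx, Kx\rangle \leq \frac{\|K\|^2}{C_0}\langle x,x\rangle$, so the statement is essentially immediate from the existence of any frame, with the whole difficulty concentrated in that existence result.
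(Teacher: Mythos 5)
Your proof is correct and takes essentially the same approach as the paper: the paper chooses a standard normalized tight frame $\{x_n\}_{n\in J}$ (so that $S=I$, the dual coefficients reduce to $a_{n,x}=\langle x,x_n\rangle$, and the bound is $C=1$) and sets $f_n=Kx_n$, which is exactly your construction specialized to the Parseval case. Your closing observation that the frame $\{g_n\}_{n\in J}$ itself is already an atomic system for $K$ (via $a_{n,x}=\langle Kx,S^{-1}g_n\rangle$ and Proposition \ref{p.2}) is a valid minor variant, but not a different method.
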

\begin{proof}
Let  $\{x_n\}_{n\in J}$ be a standard normalized tight frame for
$\h$. Since
\begin{eqnarray*}
x=\sum_{n\in J}\langle x,x_n\rangle x_n,\quad x\in \h,
\end{eqnarray*}
we have
\begin{eqnarray*}
Kx=\sum_{n\in J}\langle x,x_n\rangle Kx_n,\quad x\in \h.
\end{eqnarray*}
For $x\in\h$, putting $a_{n,x}=\langle x,x_n\rangle$ and $f_n=Kx_n$
for all $n\in J$, we get
\begin{align*}
\sum_{n\in J} \langle x,f_n\rangle\langle f_n,x\rangle&=\sum_{n\in
J} \langle
x,Kx_n\rangle\langle Kx_n,x\rangle\\
&=\sum_{n\in J}\langle K^*x,x_n\rangle \langle
x_n,K^*x\rangle=\langle
K^*x,K^*x\rangle\\
&\leqslant \|K^*\|^2\langle x,x\rangle.
\end{align*}
Therefore $\{f_n\}_{n\in J}$ is a Bessel sequence for $\h$ and we
conclude that the series $\sum_{n\in J} c_nf_n$ converges for all
$c=\{c_n\}_{n\in J}\in \ell^2(A)$ by Proposition \ref{p.1}. We also
have
\begin{eqnarray*}
\sum_{n\in J} a_{n,x}a_{n,x}^*=\sum_{n\in J} \langle
x,x_n\rangle\langle x_n,x\rangle=\langle x,x\rangle,
\end{eqnarray*}
which completes the proof.
\end{proof}
\begin{thm}\label{t.2}
Let $\{f_n\}_{n\in J}$ be a Bessel sequence for $\h$ and $K\in
L(\h)$. Suppose that  $T\in L(\h, \ell^2(A))$  is given by
$T(x)=\{\langle x,f_n\rangle \}_{n\in J}$ and $\overline{R(T)}$ is
orthogonally complemented. Then the following statements are
equivalent:
\begin{enumerate}
\item $\{f_n\}_{n\in J}$ is an atomic system for $K$;
\item There exist $C,B>0$ such that
\begin{eqnarray*}
C\|K^*x\|^2\leqslant \Big\|\sum_{n\in J} \langle x,f_n\rangle
\langle f_n,x\rangle\Big\|\leqslant B\|x\|^2;
\end{eqnarray*}
\item There exists $D\in L(\h, \ell^2(A))$ such
that $K=T^*D$.
\end{enumerate}
\end{thm}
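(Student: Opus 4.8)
The plan is to realize all three conditions as instances of the equivalences in Theorem \ref{t.1}, applied to the pair of operators $K$ and $T^*$. First I would record that $T^*\colon\ell^2(A)\to\h$ is the synthesis operator $T^*(\{c_n\}_{n\in J})=\sum_{n\in J}c_nf_n$ (Proposition \ref{p.1}), which is well defined and bounded precisely because $\{f_n\}_{n\in J}$ is a Bessel sequence, and that $(T^*)^*=T$. Thus the hypothesis that $\overline{R(T)}=\overline{R((T^*)^*)}$ is orthogonally complemented is exactly what is needed to invoke Theorem \ref{t.1} with $S=K$ and with the role of its operator ``$T$'' played by $T^*$. Under this identification, statement (3) is verbatim condition (3) of Theorem \ref{t.1}, while its conditions (1), (2), (4) become statements about $K$: namely $KK^*\leqslant\lambda T^*T$, $\|K^*z\|\leqslant\mu\|Tz\|$, and $R(K)\subseteq R(T^*)$.

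For $(1)\Rightarrow(3)$ I would argue via range inclusion. If $\{f_n\}_{n\in J}$ is an atomic system for $K$, then for each $x\in\h$ the coefficients $\{a_{n,x}\}_{n\in J}\in\ell^2(A)$ from Definition \ref{d.1} satisfy $Kx=\sum_{n\in J}a_{n,x}f_n=T^*(\{a_{n,x}\}_{n\in J})$, so $Kx\in R(T^*)$; hence $R(K)\subseteq R(T^*)$, which is condition (4) of Theorem \ref{t.1}. That theorem then yields a factorization $K=T^*D$ with $D\in L(\h,\ell^2(A))$, which is (3). For the converse $(3)\Rightarrow(1)$ I would unwind the factorization: given $K=T^*D$, set $\{a_{n,x}\}_{n\in J}:=Dx\in\ell^2(A)$ for each $x$. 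Then $Kx=T^*Dx=\sum_{n\in J}a_{n,x}f_n$ gives the reconstruction, while $\sum_{n\in J}a_{n,x}a_{n,x}^*=\langle Dx,Dx\rangle\leqslant\|D\|^2\langle x,x\rangle$ by Proposition \ref{p.2}, giving the coefficient bound with $C=\|D\|^2$. Convergence of $\sum_{n\in J}c_nf_n$ for every $\{c_n\}_{n\in J}\in\ell^2(A)$ is once more the Bessel property, so both requirements of Definition \ref{d.1} hold.

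Finally, for $(2)\Leftrightarrow(3)$ I would translate the middle quantity into an operator norm. Since $\langle Tx,Tx\rangle=\sum_{n\in J}\langle x,f_n\rangle\langle f_n,x\rangle$, one has $\|Tx\|^2=\|\sum_{n\in J}\langle x,f_n\rangle\langle f_n,x\rangle\|$. The right-hand inequality in (2) is then just the Bessel bound, which holds automatically by Proposition \ref{p.1} (equivalently \ref{p.n13}); the left-hand inequality $C\|K^*x\|^2\leqslant\|Tx\|^2$ is exactly $\|K^*x\|\leqslant\mu\|Tx\|$ with $\mu=C^{-1/2}$, i.e.\ condition (2) of Theorem \ref{t.1}, which is equivalent to its condition (3), namely $K=T^*D$. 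This closes the chain $(1)\Leftrightarrow(3)$ and $(2)\Leftrightarrow(3)$.

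The main obstacle I expect is purely bookkeeping: keeping the roles in Theorem \ref{t.1} straight, and in particular checking that its orthogonal-complementation hypothesis is imposed on $\overline{R(T)}$ (which equals $\overline{R((T^*)^*)}$, not $\overline{R(T^*)}$), so that applying it to the pair $(K,T^*)$ is legitimate. The only other genuinely substantive point is the correct conversion of the scalar norm inequality of condition (2) of Theorem \ref{t.1} into the $A$-valued frame-type inequality via $\|Tx\|^2=\|\langle Tx,Tx\rangle\|$; once this is in place, everything reduces to Theorem \ref{t.1} together with Propositions \ref{p.1} and \ref{p.2}.
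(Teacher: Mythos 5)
Your proof is correct, and it reorganizes the argument in a way that differs from the paper's in one essential leg. The paper proves the cycle $(1)\Rightarrow(2)\Rightarrow(3)\Rightarrow(1)$: the step $(1)\Rightarrow(2)$ is a direct computation, writing $\|K^*x\|=\sup_{\|y\|=1}\|\langle Ky,x\rangle\|$, substituting $Ky=\sum_{n}a_{n,y}f_n$, and estimating $\bigl\|\sum_n a_{n,y}\langle f_n,x\rangle\bigr\|^2\leqslant\bigl\|\sum_n a_{n,y}a_{n,y}^*\bigr\|\,\bigl\|\sum_n\langle x,f_n\rangle\langle f_n,x\rangle\bigr\|$ via the Cauchy--Schwarz-type inequality in $\ell^2(A)$, which yields the explicit lower bound $C=1/M$ in terms of the atomic-system constant $M$. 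You instead prove $(1)\Rightarrow(3)$ by observing that the reconstruction $Kx=T^*(\{a_{n,x}\}_{n\in J})$ alone gives the range inclusion $R(K)\subseteq R(T^*)$, i.e.\ condition (4) of Theorem \ref{t.1}, and then let that theorem produce the bounded factorization; you never need the coefficient bound $\sum_n a_{n,x}a_{n,x}^*\leqslant C\langle x,x\rangle$ for this direction, and you recover $(2)$ afterwards from $(3)$ via the equivalence of conditions (2) and (3) of Theorem \ref{t.1}. Your identification of the roles is sound: with $S=K$ and the operator ``$T$'' of Theorem \ref{t.1} taken to be $T^*$, the complementation hypothesis required there is on $\overline{R((T^*)^*)}=\overline{R(T)}$, which is exactly what Theorem \ref{t.2} assumes. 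The remaining steps ($(3)\Rightarrow(1)$ via $a_{n,x}=\langle Dx,e_n\rangle$ and Proposition \ref{p.2}, and the translation $\|Tx\|^2=\bigl\|\sum_n\langle x,f_n\rangle\langle f_n,x\rangle\bigr\|$) coincide with the paper's. What your route buys is economy: every implication becomes an instance of Theorem \ref{t.1} and the hand computation with the $C^*$-Cauchy--Schwarz inequality disappears; what the paper's route buys is an explicit, quantitative lower frame-type bound tied directly to the atomic-system constant.
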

\begin{proof}
$(1)\Rightarrow (2)$. For every $x\in \h$, we have
\begin{eqnarray*}
\|K^*x\|=\sup_{\|y\|=1}\|\langle
y,K^*x \rangle\|=\sup_{\|y\|=1}\|\langle Ky,x\rangle\|.
\end{eqnarray*}
\par
Since $\{f_n\}_{n\in J}$ is an atomic system for $K$, there exists
$M>0$ such that for every $y\in \h$ there exists
$a_y=\{a_{n,y}\}_{n\in J}\in \ell^2(A)$ for which $\sum_{n\in J}
a_{n,y}a_{n,y}^*\leqslant M\langle y,y\rangle$ and $Ky=\sum_{n\in J}
a_{n,y}f_n$. Therefore
\begin{align*}
\|K^*x\|^2&=\sup_{\|y\|=1}\|\langle
Ky,x\rangle\|^2=\sup_{\|y\|=1}\Big\|\langle \sum_{n\in J}
a_{n,y}f_n,x\rangle
\Big\|^2=\sup_{\|y\|=1}\Big\|\sum_{n\in J} a_{n,y}\langle f_n,x\rangle\Big\|^2\\
&\leqslant \sup_{\|y\|=1}\Big\|\sum_{n\in J} a_{n,y}
a_{n,y}^*\Big\|\Big\|\sum_{n\in J}\langle x,f_n\rangle\langle
f_n,x\rangle\Big\|\\
&\leqslant\sup_{\|y\|=1} M \|y\|^2\Big\|\sum_{n\in J} \langle x,f_n\rangle\langle f_n,x\rangle\Big\|\\
&= M\Big\|\sum_{n\in J} \langle x,f_n\rangle\langle
f_n,x\rangle\Big\|,
\end{align*}
for every $x\in \h$. So that
\begin{eqnarray*}
\frac{1}{M}\|K^*x\|^2\leqslant\Big\|\sum_{n\in J} \langle
x,f_n\rangle\langle f_n,x\rangle\Big\|, \quad x\in \h.
\end{eqnarray*}
Moreover, $\{f_n\}_{n\in J}$  is a Bessel sequence for $\h$. Hence
$(2)$ holds.
\par
$(2)\Rightarrow(3)$ Since $\{f_n\}_{n\in J}$ is a Bessel sequence,
we get $T^*e_n=f_n$, where $\{e_n\}_{n\in J}$ is the standard
orthonormal basis for $\ell^2(A)$. Therefore
\begin{align*}
C\|K^*x\|^2&\leqslant \Big\|\sum_{n\in J}\langle x,f_n\rangle\langle
f_n,x\rangle\Big\|=\Big\|\sum_{n\in J}\langle x,T^*e_n\rangle\langle
T^*e_n,x\rangle\Big\|\\
&=\Big\|\sum_{n\in J}\langle Tx,e_n\rangle\langle
e_n,Tx\rangle\Big\|=\|Tx\|^2, \quad x\in \h.
\end{align*}
\par
By  Theorem \ref{t.1}, there exists operator $D\in L(\h, \ell^2(A))$
such that $K=T^*D$.
\par
$(3)\Rightarrow(1)$ For every $x\in \h$, we have \[Dx=\sum_{n\in
J}\langle Dx,e_n\rangle e_n.\] Therefore \[T^*Dx=\sum_{n\in J}
\langle Dx,e_n\rangle T^*e_n,\quad x\in \h.\]Let $a_{n}=\langle
Dx,e_n\rangle$, so for all $x\in \h$ we get
\begin{eqnarray*}
\sum_{n\in J} a_{n}a_{n}^*=\sum_{n\in J}\langle Dx,e_n\rangle\langle
e_n,Dx\rangle=\langle Dx,Dx\rangle\leqslant \|D\|^2\langle
x,x\rangle.
\end{eqnarray*}
Since $\{f_n\}_{n\in J}$  is a Bessel sequence for $\h$, we obtain
that $\{f_n\}_{n\in J}$ is an atomic system for $K$.
\end{proof}
\begin{cor}
Let $\{f_n\}_{n\in J}$ be a frame for $\h$ with  bounds $C,D>0$ and
$K\in L(\h)$. Then $\{f_n\}_{n\in J}$ is an atomic system for $K$
with bounds $\frac{1}{C^{-1}\|K\|^2}$ and $D$.
\end{cor}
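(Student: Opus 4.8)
The plan is to deduce the statement from Theorem \ref{t.2}, since the two numbers $\frac{1}{C^{-1}\|K\|^2}$ and $D$ are precisely the constants appearing in condition $(2)$ of that theorem; thus it suffices to establish $(2)$ with these constants and then invoke the implication $(2)\Rightarrow(1)$. We may assume $K\neq 0$, the case $K=0$ being trivial (one takes all coefficients equal to $0$, so that $Kx=0=\sum_{n\in J}0\cdot f_n$, and the stated bound is vacuous there).

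First I would check that the hypotheses of Theorem \ref{t.2} are met. A frame is in particular a Bessel sequence, so by Proposition \ref{p.1} the map $T(x)=\{\langle x,f_n\rangle\}_{n\in J}$ is a well-defined adjointable operator in $L(\h,\ell^2(A))$. Moreover, the lower frame bound (in its norm form, i.e.\ the equivalent definition of frames given in the Introduction) gives $C\|x\|^2\leqslant\|Tx\|^2$, so $T$ is bounded below and hence has closed range; a closed range of an adjointable operator between Hilbert $C^*$-modules is orthogonally complemented, so $\overline{R(T)}=R(T)$ is orthogonally complemented and Theorem \ref{t.2} applies.

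Next I would verify inequality $(2)$. The upper estimate is immediate from the upper frame bound in norm form, which gives $\|\sum_{n\in J}\langle x,f_n\rangle\langle f_n,x\rangle\|\leqslant D\|x\|^2$. For the lower estimate, the lower frame bound in norm form gives $C\|x\|^2\leqslant\|\sum_{n\in J}\langle x,f_n\rangle\langle f_n,x\rangle\|$, while the elementary bound $\|K^*x\|\leqslant\|K^*\|\,\|x\|=\|K\|\,\|x\|$ yields $\|x\|^2\geqslant\|K\|^{-2}\|K^*x\|^2$. Combining the two gives
\[
\frac{1}{C^{-1}\|K\|^2}\|K^*x\|^2=\frac{C}{\|K\|^2}\|K^*x\|^2\leqslant C\|x\|^2\leqslant\Big\|\sum_{n\in J}\langle x,f_n\rangle\langle f_n,x\rangle\Big\|,
\]
for every $x\in\h$, which is exactly $(2)$ with lower constant $\frac{1}{C^{-1}\|K\|^2}$ and upper constant $D$. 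By Theorem \ref{t.2}, the sequence $\{f_n\}_{n\in J}$ is therefore an atomic system for $K$ with the asserted bounds.

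The only genuinely non-routine point is the verification of the orthogonal-complementedness hypothesis required to invoke Theorem \ref{t.2}; the rest is a direct manipulation of the frame inequalities together with the norm bound $\|K^*x\|\leqslant\|K\|\,\|x\|$. As an alternative route that sidesteps Theorem \ref{t.2} entirely, one could construct the atomic system explicitly from the canonical dual frame: putting $a_{n,x}=\langle Kx,S^{-1}f_n\rangle$ gives $Kx=\sum_{n\in J}a_{n,x}f_n$ by the reconstruction formula applied to $Kx$, and since $\{S^{-1}f_n\}_{n\in J}$ has upper bound $C^{-1}$ one obtains, using Proposition \ref{p.2}, the estimate $\sum_{n\in J}a_{n,x}a_{n,x}^*\leqslant C^{-1}\langle Kx,Kx\rangle\leqslant C^{-1}\|K\|^2\langle x,x\rangle$, verifying Definition \ref{d.1} directly.
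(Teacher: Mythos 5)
Your proof is correct, and for the crucial lower estimate it takes a genuinely different and simpler route than the paper. Both arguments reduce the corollary to condition $(2)$ of Theorem \ref{t.2}, but the paper obtains the inequality $\|K^*x\|^2\leqslant C^{-1}\|K\|^2\big\|\sum_{n\in J}\langle x,f_n\rangle\langle f_n,x\rangle\big\|$ by expanding $K^*x$ through the reconstruction formula with the canonical dual frame $\{S^{-1}f_n\}_{n\in J}$, applying a Cauchy--Schwarz-type estimate, and using the upper bound $C^{-1}$ of the dual frame; you instead observe that the trivial bound $\|K^*x\|\leqslant\|K\|\,\|x\|$ combined with the lower frame inequality in norm form, $C\|x\|^2\leqslant\big\|\sum_{n\in J}\langle x,f_n\rangle\langle f_n,x\rangle\big\|$, already yields the same constant $\frac{C}{\|K\|^2}=\frac{1}{C^{-1}\|K\|^2}$ in one line, with no reference to $S$ or the dual frame at all. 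Your version makes transparent why the statement is almost automatic (any frame dominates $\|K^*x\|^2$ up to the factor $\|K\|^2$ for every bounded $K$), and it also supplies two points the paper glosses over: the separate treatment of $K=0$, where the stated constant is undefined, and the verification that $\overline{R(T)}$ is orthogonally complemented (via the lower frame bound forcing $T$ to be bounded below, hence to have closed, and therefore orthogonally complemented, range), which is a standing hypothesis of Theorem \ref{t.2} that the paper's proof invokes without checking. Your alternative direct construction with $a_{n,x}=\langle Kx,S^{-1}f_n\rangle$ is essentially the content of the paper's computation repackaged as a verification of Definition \ref{d.1}; if you present it that way, you should also note explicitly that condition $(1)$ of that definition holds because a frame is a Bessel sequence, so Proposition \ref{p.1} gives the convergence of $\sum_{n\in J}c_nf_n$ for all $\{c_n\}_{n\in J}\in\ell^2(A)$.
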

\begin{proof}
Let $S$ be the frame operator of $\{f_n\}_{n\in J}$. We prove that
the condition  $(2)$ of Theorem \ref{t.2} holds. Since
$\{S^{-1}f_n\}_{n\in J}$ is a frame for $\h$  with bounds
$D^{-1},C^{-1}>0$ and  $x=\sum_{n\in J}\langle x,f_n\rangle
S^{-1}f_n$ for all $x\in \h$, we get
\begin{align*}
\|K^*x\|^2&=\sup_{\|y\|=1}\|\langle
K^*x,y\rangle\|^2=\sup_{\|y\|=1}\Big\|\langle \sum_{n\in J}\langle
x,f_n\rangle K^*S^{-1}f_n, y\rangle\Big\|^2\\
&=\sup_{\|y\|=1}\Big\|\sum_{n\in J}\langle x,f_n\rangle\langle K^*S^{-1}f_n,y\rangle\Big\|^2\\
&\leqslant \sup_{\|y\|=1}\Big\|\sum_{n\in J}\langle
x,f_n\rangle\langle f_n,x\rangle\Big\|\Big\|\sum_{n\in J}\langle
Ky,S^{-1}f_n\rangle\langle S^{-1}f_n,Ky\rangle\Big\|\\
&\leqslant\sup_{\|y\|=1} C^{-1}\Big\|\sum_{n\in J}\langle
x,f_n\rangle\langle
f_n,x\rangle\Big\|\|Ky\|^2\\
&=C^{-1}\|K\|^2\Big\|\sum_{n\in J}\langle x,f_n\rangle\langle
f_n,x\rangle\Big\|.
\end{align*}
So
\begin{align*}
\frac{1}{C^{-1}\|K\|^2}\|K^*x\|^2\leqslant \Big\|\sum_{n\in J}
\langle x,f_n\rangle\langle f_n,x\rangle\Big\|\leqslant D\|x\|^2,
\quad x\in \h.
\end{align*}
Therefore $\{f_n\}_{n\in J}$ is an atomic system for $K$.
\end{proof}
\par
The converse of the above corollary holds when the operator $K$ is
onto.
\begin{cor}
Let $\{f_n\}_{n\in J}$ be an atomic system for $K$. If $K\in L(\h)$
is onto, then $\{f_n\}_{n\in J}$ is a frame for $\h$.
\end{cor}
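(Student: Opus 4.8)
The plan is to verify the two norm inequalities that, by the equivalent description of frames recorded in the Introduction (the Proposition of \cite{W}), characterize a frame for a finitely or countably generated Hilbert $A$-module; that is, to produce constants $C',D'>0$ with
\[
C'\|x\|^2 \leqslant \Big\|\sum_{n\in J}\langle x,f_n\rangle\langle f_n,x\rangle\Big\| \leqslant D'\|x\|^2, \quad x\in\h.
\]
The upper inequality is immediate: condition $(1)$ of Definition \ref{d.1} states that $\sum_{n\in J} c_n f_n$ converges for every $c\in\ell^2(A)$, so Proposition \ref{p.n12} shows that $\{f_n\}_{n\in J}$ is a Bessel sequence and supplies the bound $D'$.

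For the lower inequality I would repeat the computation carried out in the implication $(1)\Rightarrow(2)$ of Theorem \ref{t.2}, noting that this part does \emph{not} use the orthogonal complementation hypothesis appearing there (so it is available under the present weaker hypotheses). Starting from $\|K^*x\|=\sup_{\|y\|\leqslant 1}\|\langle Ky,x\rangle\|$, writing $Ky=\sum_{n\in J} a_{n,y}f_n$ with $\sum_{n\in J} a_{n,y}a_{n,y}^*\leqslant M\langle y,y\rangle$ (the atomic-system constant, renamed $M$), and applying the Cauchy--Schwarz inequality in $\ell^2(A)$, one obtains
\[
\frac1M\|K^*x\|^2 \leqslant \Big\|\sum_{n\in J}\langle x,f_n\rangle\langle f_n,x\rangle\Big\|, \quad x\in\h.
\]

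It remains to convert the factor $\|K^*x\|^2$ into $\|x\|^2$, and this is where surjectivity of $K$ enters; I expect this to be the only genuine obstacle. The key claim is that an onto operator $K\in L(\h)$ has an adjoint bounded below, i.e.\ there is $\delta>0$ with $\|K^*x\|\geqslant\delta\|x\|$ for all $x\in\h$. To see this, apply the Open Mapping Theorem to $K$, viewed as a bounded surjection of the Banach space $\h$, to obtain $\delta>0$ with $\{Ky:\|y\|\leqslant 1\}\supseteq\{z:\|z\|<\delta\}$, and combine it with the identity $\|w\|=\sup_{\|z\|\leqslant 1}\|\langle z,w\rangle\|$, valid in any Hilbert $C^*$-module. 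Then
\[
\|K^*x\|=\sup_{\|y\|\leqslant 1}\|\langle Ky,x\rangle\| \geqslant \sup_{\|z\|<\delta}\|\langle z,x\rangle\| = \delta\|x\|.
\]
Substituting this into the previous display yields the lower bound with $C'=\delta^2/M$, and the two inequalities together establish that $\{f_n\}_{n\in J}$ is a frame for $\h$.
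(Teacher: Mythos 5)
Your proof is correct, and at its core it follows the same strategy as the paper's: combine the atomic-system estimate $\tfrac1M\|K^*x\|^2\leqslant\big\|\sum_{n\in J}\langle x,f_n\rangle\langle f_n,x\rangle\big\|$ with a lower bound $\|K^*x\|\geqslant\delta\|x\|$ supplied by the surjectivity of $K$. The differences lie in how you obtain the two ingredients. The paper invokes Theorem \ref{t.2} for the first and cites Proposition 2.1 of \cite{A} for the second; you instead (i) re-run the $(1)\Rightarrow(2)$ computation of Theorem \ref{t.2} inline, observing that this direction never uses the hypothesis that $\overline{R(T)}$ be orthogonally complemented, and (ii) derive the lower bound on $\|K^*\|$ from the Open Mapping Theorem combined with the identity $\|w\|=\sup_{\|z\|\leqslant 1}\|\langle z,w\rangle\|$. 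Both substitutions are sound, and (i) is a genuine improvement in rigor: the corollary as stated carries no orthogonal-complementation assumption, so the paper's appeal to Theorem \ref{t.2} is an appeal to a result whose hypotheses are not verified there, whereas your inlined argument makes the corollary honestly hypothesis-free. Point (ii) amounts to a self-contained proof of exactly the half of Proposition 2.1 of \cite{A} that is needed ($K$ onto implies $K^*$ bounded below), which buys independence from that reference at the cost of a few extra lines; your verification that the image of the closed unit ball under a surjection contains a ball of radius $\delta$, and that $\sup_{\|z\|<\delta}\|\langle z,x\rangle\|=\delta\|x\|$, is correct. The upper (Bessel) bound via Proposition \ref{p.n12} is also handled properly.
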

\begin{proof}
By Proposition 2.1 from \cite{A},  $K\in L(\h)$ is surjective if and
only if  there is $M>0$ such that
\begin{eqnarray*}
M\|x\|\leqslant \|K^*x\|,\quad x\in \h.
\end{eqnarray*}
Since $\{f_n\}$ is an atomic system for $K$, by Theorem \ref{t.2},
there exsit $C,B>0$ such that
\begin{eqnarray*}
C\|K^*x\|^2\leqslant \Big\|\sum_{n\in J} \langle x,f_n\rangle
\langle f_n,x\rangle\Big\|\leqslant B\|x\|^2,\quad x\in \h.
\end{eqnarray*}
Therefore
\begin{eqnarray*}
M^2C\|x\|^2\leqslant \Big\|\sum_{n\in J} \langle x,f_n\rangle
\langle f_n,x\rangle\Big\|\leqslant B\|x\|^2,
\end{eqnarray*}
for all $x\in \h$.
\end{proof}
\section{{\textbf K-FRAMES IN HILBERT $C^*$-MODULES}}
\begin{defn}
Let  $J\subseteq\Bbb{N}$ be a finite or countable index set. A
sequence $\{f_n\}_{n\in J}$ of elements in a Hilbert $A$-module $\h$
is said to be a \textit{$K$-frame} $(K\in L(\h))$ if there exist
constants $C,D>0$ such that
\begin{eqnarray}\label{e.1}
C\langle K^*x,K^*x\rangle \leqslant \sum_{n\in J} \langle
x,f_n\rangle\langle f_n,x\rangle\leqslant D\langle x,x\rangle,\quad
x\in \h.
\end{eqnarray}
\end{defn}
\begin{thm}
Let $\{f_n\}_{n\in J}$ be a Bessel sequence for $\h$ and $K\in
L(\h)$. Suppose that  $T\in L(\h, \ell^2(A))$  is given by
$T(x)=\{\langle x,f_n\rangle \}_{n\in J}$ and $\overline{R(T)}$ is
orthogonally complemented. Then $\{f_n\}_{n\in J}$ is a $K$-frame
for $\h$ if and only if there exists a linear bounded operator
$L:\ell^2(A)\to \h$ such that $Le_n=f_n$ and $R(K)\subseteq R(L)$,
where $\{e_n\}_n$ is the orthonormal basis for $\ell^2(A)$.
\end{thm}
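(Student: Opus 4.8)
The statement to prove relates the $K$-frame property of $\{f_n\}_{n\in J}$ to the existence of an operator $L:\ell^2(A)\to\h$ with $Le_n=f_n$ and $R(K)\subseteq R(L)$. My plan is to recognize that the natural candidate for $L$ is precisely the synthesis operator $T^*$ associated to the Bessel sequence. Indeed, by Proposition~\ref{p.1}, since $\{f_n\}_{n\in J}$ is Bessel, the map $T(x)=\{\langle x,f_n\rangle\}_{n\in J}$ is adjointable with $T^*(\{c_n\})=\sum_{n\in J}c_nf_n$, and in particular $T^*e_n=f_n$. So I would set $L=T^*$ from the outset; the content of the theorem then reduces to showing that the lower $K$-frame inequality in \eqref{e.1} is equivalent to the range inclusion $R(K)\subseteq R(T^*)$.

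\begin{proof}
Since $\{f_n\}_{n\in J}$ is a Bessel sequence, by Proposition~\ref{p.1} the operator $T^*:\ell^2(A)\to\h$ satisfies $T^*e_n=f_n$ and is bounded. We claim that $\{f_n\}_{n\in J}$ is a $K$-frame if and only if $R(K)\subseteq R(T^*)$; taking $L=T^*$ then yields the conclusion.

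Note first that for every $x\in\h$,
\begin{align*}
\sum_{n\in J}\langle x,f_n\rangle\langle f_n,x\rangle
&=\sum_{n\in J}\langle x,T^*e_n\rangle\langle T^*e_n,x\rangle\\
&=\sum_{n\in J}\langle Tx,e_n\rangle\langle e_n,Tx\rangle
=\langle Tx,Tx\rangle,
\end{align*}
so that $\big\|\sum_{n\in J}\langle x,f_n\rangle\langle f_n,x\rangle\big\|=\|Tx\|^2$.

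Suppose first that $\{f_n\}_{n\in J}$ is a $K$-frame with lower bound $C$. From the left-hand inequality in \eqref{e.1} we obtain $C\langle K^*x,K^*x\rangle\leqslant \langle Tx,Tx\rangle$ for all $x\in\h$, hence
\[
C\|K^*x\|^2\leqslant\|Tx\|^2=\|T^*{}^*x\|^2,\quad x\in\h.
\]
This is exactly condition $(2)$ of Theorem~\ref{t.1} applied with $S=K$ and the operator $T^*$ in the role of the second operator (so that its adjoint is $T$). Since $\overline{R(T)}$ is orthogonally complemented by hypothesis, Theorem~\ref{t.1} gives $R(K)\subseteq R(T^*)$.

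Conversely, suppose $R(K)\subseteq R(T^*)$. Again by Theorem~\ref{t.1} (the implication $(4)\Rightarrow(2)$, with $S=K$ and adjoint $T$ of $T^*$), there exists $\mu>0$ with $\|K^*x\|\leqslant\mu\|Tx\|$ for all $x\in\h$. The same theorem, via the implication $(2)\Rightarrow(1)$, yields $KK^*\leqslant\lambda\,T^*T$ for some $\lambda>0$, i.e.
\[
\langle K^*x,K^*x\rangle=\langle KK^*x,x\rangle\leqslant\lambda\langle T^*Tx,x\rangle=\lambda\langle Tx,Tx\rangle
=\lambda\sum_{n\in J}\langle x,f_n\rangle\langle f_n,x\rangle,
\]
for all $x\in\h$. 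Setting $C=\lambda^{-1}$ gives the lower $K$-frame inequality, while the upper inequality holds since $\{f_n\}_{n\in J}$ is Bessel. Thus $\{f_n\}_{n\in J}$ is a $K$-frame for $\h$.
\end{proof}

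**Where the difficulty lies.** The substantive point is the passage between the operator-inequality form of the $K$-frame condition and the range-inclusion statement, and this is exactly what Theorem~\ref{t.1} is designed to deliver. The main subtlety I would watch carefully is the roles of the operators in applying Theorem~\ref{t.1}: that theorem is phrased in terms of $SS^*\leqslant\lambda TT^*$ and the orthogonal complementability of $\overline{R(T^*)}$, whereas here the given hypothesis is complementability of $\overline{R(T)}$. One must therefore apply the theorem with the synthesis operator $T^*$ playing the role of its "$T$," whose adjoint is $T$, so that the complementability hypothesis matches. The only genuinely technical step is verifying the norm identity $\big\|\sum_{n\in J}\langle x,f_n\rangle\langle f_n,x\rangle\big\|=\|Tx\|^2$, which follows from the reproducing property of the orthonormal basis $\{e_n\}$ in $\ell^2(A)$ together with $T^*e_n=f_n$; everything else is a direct invocation of the equivalences in Theorem~\ref{t.1}.
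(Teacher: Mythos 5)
Your proposal is correct and follows essentially the same route as the paper: identify $L$ with the synthesis operator $T^*$ (so $Le_n=T^*e_n=f_n$), rewrite $\sum_{n\in J}\langle x,f_n\rangle\langle f_n,x\rangle$ as $\langle Tx,Tx\rangle$, and use the equivalences of Theorem~\ref{t.1} to pass between the lower $K$-frame inequality $C\|K^*x\|^2\leqslant\|Tx\|^2$ (equivalently $KK^*\leqslant\lambda T^*T$) and the range inclusion $R(K)\subseteq R(T^*)$. The only cosmetic difference is which chain of implications inside Theorem~\ref{t.1} you invoke, and you share with the paper the implicit identification of an arbitrary admissible $L$ with $T^*$ in the converse direction.
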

\begin{proof}
Suppose that (\ref{e.1}) holds. Then
 $C\|K^*x\|^2\leqslant \|Tx\|^2$ for all $x\in \h$. By Theorem
\ref{t.1}, there exists $\lambda>0$ such that
\begin{eqnarray*}
KK^*\leqslant \lambda T^*T.
\end{eqnarray*}
Setting $T^*=L$, we get $KK^*\leqslant \lambda LL^*$ and therefore
$R(K)\subseteq R(L)$.
\par
Conversely, since $R(K)\subseteq R(L)$, by Theorem \ref{t.1} there
exists $\lambda>0$ such that $KK^*\leqslant \lambda LL^*$. Therefore
\begin{align*}
\frac{1}{\lambda}\langle K^*x,K^*x\rangle&\leqslant \langle
L^*x,L^*x\rangle=\sum_{n\in J} \langle x,f_n\rangle\langle
f_n,x\rangle,\quad x\in\h.
\end{align*}
Hence $\{f_n\}_{n\in J}$ is a $K$-frame for $\h$.
\end{proof}
\par
In the following theorem we offer a condition for getting a frame
from a $K$-frame.
\begin{thm}
Let $\{f_n\}_{n\in J}$ be a $K$-frame for $\h$ with bounds $C,D>0$.
If the operator $K$ is surjective, then $\{f_n\}_{n\in J}$ is a
frame for $\h$.
\end{thm}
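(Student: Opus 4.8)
The plan is to reduce everything to the \emph{norm} form of the frame inequalities and then invoke the norm characterization of frames from \cite{W}; note that the theorem as stated carries no orthogonal-complementation hypothesis, which already suggests that Theorem \ref{t.1} is not needed here and that the argument should run purely through scalar estimates. The upper bound is immediate: the $K$-frame condition includes $\sum_{n\in J}\langle x,f_n\rangle\langle f_n,x\rangle \leqslant D\langle x,x\rangle$, an inequality between positive elements of $A$, so passing to norms and using $\|\langle x,x\rangle\|=\|x\|^2$ gives $\big\|\sum_{n\in J}\langle x,f_n\rangle\langle f_n,x\rangle\big\|\leqslant D\|x\|^2$ for all $x\in\h$ (equivalently, $\{f_n\}_{n\in J}$ is a Bessel sequence).

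For the lower bound I would first treat the left-hand $K$-frame inequality $C\langle K^*x,K^*x\rangle \leqslant \sum_{n\in J}\langle x,f_n\rangle\langle f_n,x\rangle$, which is again an order relation between positive elements of $A$. Taking norms and using $\|\langle K^*x,K^*x\rangle\|=\|K^*x\|^2$ yields $C\|K^*x\|^2\leqslant \big\|\sum_{n\in J}\langle x,f_n\rangle\langle f_n,x\rangle\big\|$ for every $x\in\h$. The next step is to exploit surjectivity of $K$ exactly as in the earlier corollary: by Proposition 2.1 of \cite{A}, $K$ is onto if and only if there is $M>0$ with $M\|x\|\leqslant \|K^*x\|$ for all $x\in\h$, and hence $M^2\|x\|^2\leqslant \|K^*x\|^2$. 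Chaining this with the previous estimate gives $CM^2\|x\|^2 \leqslant C\|K^*x\|^2 \leqslant \big\|\sum_{n\in J}\langle x,f_n\rangle\langle f_n,x\rangle\big\|$.

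Combining the two bounds, I obtain $CM^2\|x\|^2 \leqslant \big\|\sum_{n\in J}\langle x,f_n\rangle\langle f_n,x\rangle\big\| \leqslant D\|x\|^2$ for all $x\in\h$, so the norm characterization of frames from \cite{W} shows that $\{f_n\}_{n\in J}$ is a frame for $\h$ with bounds $CM^2$ and $D$. The only non-routine ingredient is the characterization of surjectivity of an adjointable operator in \cite{A} that supplies the constant $M$; I expect that to be the crux, since without it there is no way to replace $\langle K^*x,K^*x\rangle$ by $\langle x,x\rangle$ from below. The remaining passages from inequalities in $A$ to scalar norm inequalities are routine once one recalls that the norm is monotone on positive elements of a $C^*$-algebra and that $\|\langle y,y\rangle\|=\|y\|^2$.
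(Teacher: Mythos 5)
Your proof is correct, and it turns on exactly the same key ingredient as the paper's: the characterization from Proposition 2.1 of \cite{A} that surjectivity of $K$ gives a lower bound for $K^*$ in terms of the identity. The only real difference is procedural. The paper never passes to norms: it quotes the surjectivity criterion in its algebra-valued form $M\langle x,x\rangle\leqslant \langle K^*x,K^*x\rangle$ and simply chains the order inequalities in $A$,
\[
MC\langle x,x\rangle \leqslant C\langle K^*x,K^*x\rangle\leqslant \sum_{n\in J}\langle x,f_n\rangle\langle f_n,x\rangle\leqslant D\langle x,x\rangle,
\]
which is already the frame inequality of Definition 1.1 with bounds $MC$ and $D$. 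You instead take norms of both $K$-frame inequalities, use the norm form $M\|x\|\leqslant\|K^*x\|$ of the same surjectivity criterion, and then convert the resulting scalar estimate back into the frame property via the norm characterization of frames from \cite{W}. That round trip is valid, but it costs you two things: the final bounds come out as $CM^2$ and $D$ with a possibly different constant $M$ (the two forms of the \cite{A} criterion are equivalent but not with the same constant), and, more importantly, the appeal to the \cite{W} equivalence requires $\h$ to be finitely or countably generated, a hypothesis the paper's direct order-theoretic argument does not need for this theorem. So your route is a correct but slightly less economical and slightly less general version of the same proof; if you keep the surjectivity criterion in its $\langle\cdot,\cdot\rangle$-valued form throughout, the detour through \cite{W} disappears entirely.
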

\begin{proof}
By Proposition 2.1 from \cite{A},  $K\in L(\h)$ is surjective if and
only if there is $M>0$ such that
\begin{eqnarray*}
M\langle x,x\rangle\leqslant \langle K^*x,K^*x\rangle,\quad x\in \h.
\end{eqnarray*}
Since $\{f_n\}_{n\in J}$ is a $K$-frame, we get from (\ref{e.1})
\begin{eqnarray*}
M C\langle x,x\rangle \leqslant C\langle K^*x,K^*x\rangle\leqslant
\sum_{n\in J} \langle x,f_n\rangle\langle f_n,x\rangle\leqslant
D\langle x,x\rangle,\quad x\in \h.
\end{eqnarray*}
\end{proof}
\begin{prop}
A Bessel sequence $\{f_n\}_{n\in J}$ of $\h$ is a $K$-frame with
bounds $C,D>0$ if and only if $S\geqslant C KK^*$, where $S$ is the
frame operator for $\{f_n\}_{n\in J}$.
\end{prop}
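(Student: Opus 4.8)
The plan is to recognize that both inequalities defining the $K$-frame property (\ref{e.1}) can be rewritten in terms of the frame operator $S$ and the positive operator $KK^*$, so that the asserted equivalence reduces to an algebraic identity together with the definition of the order on $L(\h)$. Since $\{f_n\}_{n\in J}$ is assumed to be a Bessel sequence, the analysis operator $T(x)=\{\langle x,f_n\rangle\}_{n\in J}$ is adjointable and bounded by Proposition \ref{p.1}, so the frame operator $S=T^*T$ is a well-defined positive element of $L(\h)$ satisfying
\[
\langle Sx,x\rangle=\sum_{n\in J}\langle x,f_n\rangle\langle f_n,x\rangle,\qquad x\in\h.
\]
On the other side, applying the adjoint relation $\langle Ky,x\rangle=\langle y,K^*x\rangle$ with $y=K^*x$ gives the identity $\langle KK^*x,x\rangle=\langle K^*x,K^*x\rangle$ for every $x\in\h$.

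Next I would substitute these two identities into (\ref{e.1}). The right-hand inequality is exactly $\langle Sx,x\rangle\leqslant D\langle x,x\rangle$, which is precisely the Bessel condition assumed in the hypothesis; hence it carries no new information and is automatically available for whatever upper bound $D$ the sequence admits. The entire content of the $K$-frame property therefore lies in the left-hand inequality, which, using the two identities above, becomes
\[
C\langle KK^*x,x\rangle\leqslant\langle Sx,x\rangle,\qquad x\in\h,
\]
that is, $\langle(S-CKK^*)x,x\rangle\geqslant 0$ for all $x\in\h$.

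Finally I would invoke the definition of positivity in $L(\h)$: an operator $R\in L(\h)$ satisfies $R\geqslant 0$ exactly when $\langle Rx,x\rangle\geqslant 0$ in $A$ for every $x\in\h$. Taking $R=S-CKK^*$ shows that the displayed inequality holds for all $x$ if and only if $S\geqslant CKK^*$. Reading this chain of equivalences in the forward direction recovers the lower $K$-frame bound from $S\geqslant CKK^*$, and reading it backward, together with the Bessel upper bound, gives the full $K$-frame inequality (\ref{e.1}). I do not expect a genuine obstacle here; the single point meriting care is to state explicitly that the upper $K$-frame bound coincides with the standing Bessel hypothesis and thus drops out, leaving the single operator inequality $S\geqslant CKK^*$ as the exact translation of the $K$-frame condition.
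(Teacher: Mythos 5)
Your proposal is correct and follows essentially the same route as the paper: rewrite $C\langle K^*x,K^*x\rangle$ as $\langle CKK^*x,x\rangle$ and the middle sum as $\langle Sx,x\rangle$, note the upper bound is just the Bessel hypothesis, and invoke the characterization of positivity of an adjointable operator via $\langle Rx,x\rangle\geqslant 0$. Your write-up is in fact more explicit than the paper's one-line display, but there is no difference in method.
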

\begin{proof}
A sequence $\{f_n\}_{n\in J}$ is a $K$-frame for $\h$ if and only if
\begin{align*}
\langle CKK^*x,x\rangle=C\langle K^*x,K^*x\rangle\leqslant
\sum_{n\in J}\langle x,f_n\rangle\langle f_n,x\rangle=\langle
Sx,x\rangle\leqslant D\langle x,x\rangle,
\end{align*}
\end{proof}



\begin{thebibliography}{99}

\bibitem{A} Lj. Aramba\v{s}i\'{c}, {\it On frames for countably generated Hilbert $C^*$-modules,} Proc. Amer. Math. Soc, 2(135)(2007), 469-478.

\bibitem{BG} D. Baki\'{c}., B. Gulja\v{s}, {\it Hilbert $C^*$-modules over $C^*$-algebras of compact operators,} Acta Sci. Math(Szeged), 1-2(68)(2002), 249-269.

\bibitem{ch} O. Christensen, {\it An introduction to frames and Riesz bases,} $Birkh\ddot{a}user$, Boston-Basel-Berlin, 2002.

\bibitem{ds} R. J. Duffin and A. C. Schaeffer, {\it A class of nonharmonic Fourier series,} Trans. Amer. Math. Soc. (72)(1952), 341-366.

\bibitem{fy} X. Fang, J. Yu, H. Yao, {\it Solutions to operator equations on Hilbert $C^*$-modules,} Linear Algebra. Appl, 11(431)(2009) 2142-2153.

\bibitem{fl1} M. Frank, D. R. Larson, {\it A module frame concept for Hilbert $C^*$-modules,} The functional and harmonic analysis of wavelets and frames(San Antonio, TX, 1999), Contemp. Math., (247)(1999), 207-233.

\bibitem{fl2} M. Frank, D. R. Larson, {\it Frames in Hilbert $C^*$-modules and $C^*$-algebras,} J. Operator Theory, 2(48)(2002), 273-314.

\bibitem{G} L. G\u{a}vruta, {\it Frames for operators,} App. Comput. Harmon. Anal. 1(32)(2012), 139-144.
\bibitem{Kas} G.G. Kasparov, Hilbert $C^*$-modules: theorems of Stinespring and Voiculescu, J. Operator Theory 4(1)(1980), 133--150.

\bibitem{Li} H. Li, A Hilbert $C^*$-module admitting no frames, Bull. London Math. Soc. 42(3)(2010), 388--394.
\bibitem{W}\label{W}W. Jing, {\it Frames in Hilbert $C^*$-modules}, Ph.D. Thesis, University of Central Frorida. 2006.

\bibitem{L}\label{L.1} E. C. Lance, {\it Hilbert C*-modules: A toolkit for operator algebraists,} London Mathematical Society Lecture Note Series 210, Cambridge University Press, Cambridge, 1995.

\bibitem{M} V.M. Manuilov, E.V. Troitsky, Hilbert $C^*$-Modules, Translations of Mathematical Monographs, Vol. 226, AMS, Providence, Rhode Island, 2005.

\bibitem{P}\label{P} W. Paschke, {\it Inner product modules over $B^*$-algebras,} Trans. Amer. Math. Soc., (182)(1973), 443-468.

\bibitem{X} X. Xiao, Y. Zhu, L. G\u{a}vruta, {\it Some properties of $K$-frames in Hilbert spaces}, Results Math. 3-4(63)(2013) 1243-1255.
\end{thebibliography}
\end{document}